\documentclass[a4paper,11pt,dvipsnames]{article}

\usepackage[T1]{fontenc}
\usepackage[utf8]{inputenc}
\usepackage[english]{babel}
\usepackage{lmodern}

\usepackage{amsthm, amssymb}
\usepackage{mathtools, thmtools}
\usepackage{dsfont, stmaryrd}

\usepackage[margin=3.5cm]{geometry}
\usepackage{graphicx, microtype}
\usepackage[labelfont=sc, size=footnotesize]{caption}
\usepackage{tikz, xcolor}

\usepackage[unicode, colorlinks=true,
pdfauthor={Ulysse Herbach},
pdftitle={Stochastic gene expression with a multistate promoter: breaking down exact distributions},
pdfsubject={Article},
bookmarksnumbered
]{hyperref}
\hypersetup{linkcolor=RoyalBlue, citecolor=Emerald, urlcolor=RubineRed}
\usepackage[nameinlink,capitalise,noabbrev]{cleveref}

\usepackage[pagestyles]{titlesec}
\usepackage{bookmark}

\newpagestyle{main}{
\sethead[][][]
{\ifthesection{\em\thesection\hspace{0.5em}\sectiontitle}{\em\sectiontitle}}{}{\ifthesubsection{\em\thesubsection\hspace{0.5em}\subsectiontitle}{\em\subsectiontitle}}
\headrule
\setheadrule{0.5pt}
\setfoot{}{\thepage}{}
}

\renewpagestyle{plain}{\setfoot{}{\thepage}{}}

\declaretheorem[name=Theorem, numberwithin=section]{theorem}
\declaretheorem[name=Lemma, sibling=theorem]{lemma}
\declaretheorem[name=Proposition, sibling=theorem]{proposition}
\declaretheorem[name=Corollary, sibling=theorem]{corollary}
\declaretheorem[name=Remark, sibling=theorem, style=definition]{remark}

\definecolor{rouge}{rgb}{0.85,0.11,0.07}
\definecolor{vert}{rgb}{0.25,0.75,0.15}
\definecolor{bleu}{rgb}{0.0,0.36,0.80}
\definecolor{orange}{rgb}{0.94,0.43,0.02}
\definecolor{gris}{rgb}{0.5,0.5,0.5}

\newcommand*{\N}{\ensuremath \mathbb{N}} 
\newcommand*{\Z}{\ensuremath \mathbb{Z}} 
\newcommand*{\R}{\ensuremath \mathbb{R}} 
\renewcommand*{\C}{\ensuremath \mathbb{C}} 
\newcommand*{\lb}{\ensuremath \llbracket} 
\newcommand*{\rb}{\ensuremath \rrbracket} 
\renewcommand*{\Re}{\ensuremath \operatorname{Re}} 
\newcommand*{\segn}{\ensuremath {\llbracket 1, n \rrbracket}} 
\newcommand*{\dr}{\ensuremath \partial} 
\newcommand*{\intd}[1]{\ensuremath {d}{#1}} 
\newcommand*{\hyper}[2]{\ensuremath {{}_{#1}F_{#2}}} 
\newcommand*{\Mat}[1]{\ensuremath \R^{#1\times #1}} 
\newcommand*{\diag}{\ensuremath \operatorname{Diag}} 
\newcommand*{\Prob}{\ensuremath \mathbb{P}} 
\newcommand*{\Esp}{\ensuremath \mathbb{E}} 
\newcommand*{\Var}{\ensuremath \operatorname{Var}} 
\newcommand*{\Poi}{\ensuremath \operatorname{Poisson}} 
\newcommand*{\Gam}{\ensuremath \operatorname{Gamma}} 
\newcommand*{\Bet}{\ensuremath \operatorname{Beta}} 
\newcommand*{\Dir}{\ensuremath \operatorname{Dirichlet}} 
\newcommand*{\chem}[1]{\ensuremath \mathsf{#1}} 

\pagestyle{main}
\usetikzlibrary{cd}

\title{\vspace{-15mm}\textbf{Stochastic gene expression with a multistate promoter: breaking down exact distributions}}

\author{Ulysse Herbach\thanks{Univ Lyon, Université Claude Bernard Lyon 1, CNRS UMR 5208, Inria, Institut Camille Jordan, F-69603 Villeurbanne, France (\href{mailto:ulysse.herbach@inria.fr}{ulysse.herbach@inria.fr}).}}

\date{}

\begin{document}

\maketitle

\vspace{-5mm}

\begin{abstract}
We consider a stochastic model of gene expression in which transcription depends on a multistate promoter, including the famous two-state model and refractory promoters as special cases, and focus on deriving the exact stationary distribution.
Building upon several successful approaches, we present a more unified viewpoint that enables us to simplify and generalize existing results. In particular, the original jump process is deeply related to a multivariate piecewise-deterministic Markov process that may also be of interest beyond the biological field. In a very particular case of promoter configuration, this underlying process is shown to have a simple Dirichlet stationary distribution. In the general case, the corresponding marginal distributions extend the well-known class of Beta products, involving complex parameters that directly relate to spectral properties of the promoter transition matrix. Finally, we illustrate these results with biologically plausible examples.
\end{abstract}

\section{Introduction}
\label{sec:intro}

Gene expression within a cell, that is, transcription of specific regions of its DNA into mRNA molecules (to be then translated into proteins), is now well acknowledged to be a stochastic phenomenon, resulting from a set of various chemical reactions and typically modeled as a jump Markov process~\cite{Anderson2015}. Indeed, some of the chemical species involved are only present in very small quantities and their molecular fluctuations therefore generate a cellular “intrinsic noise”~\cite{Schnoerr2017}.
The entire set of underlying reactions may be huge but as a simple compromise, a gene is usually described by its \emph{promoter} and gene expression models consist of two reactions occurring in parallel: creation of mRNA by the promoter and degradation of mRNA~\cite{Dattani2017}. 
When both creation and degradation have constant rates, one gets a standard birth-death process that has a Poisson stationary distribution. Whereas such an elementary degradation is often accepted as a first-order approximation~\cite{Coulon2010,Schwanhausser2011}, the creation part is somewhat of a hot topic and more sophisticated models have been proposed, depending on the biological context~\cite{Coulon2010,Zoller2015,Dattani2017}.

For instance, measures of gene expression in individual, isogenic cells in the same environment typically show a heavy-tailed distribution with a clearly non-Poisson variance~\cite{Albayrak2016,Bahar-Halpern2015}. The simplest model to account for this fact is the well-established “two-state model”, which is a birth-death process in random environment~\cite{Peccoud1995,Kim2013}. As suggested by the name, such a promoter has one \emph{active} state in which the mRNA creation rate is positive and one \emph{inactive} state in which the creation rate is zero.
Depending on the switching rates between states, the time spent in the active one can be short enough to generate, when combined with a high creation rate, so-called “bursty” mRNA dynamics~\cite{Bahar-Halpern2015,Herbach2017} leading to a much more realistic distribution than the one-state previous model.

The two-state model has the advantage of being tractable, and sometimes it can even be physically justified as a relevant approximation (e.g., in bacteria~\cite{Chong2014}). However, as single-cell experiments become more precise, it appears that some promoters cannot be described by only two states because their inactive period has a nonexponential distribution with a positive mode~\cite{Zoller2015}. Such cases suggest a “refractory” behavior, meaning that after each active phase, the promoter has to progress through several inactive states before becoming active again.
Note that one could still consider a two-state model by accepting losing the Markov property. In fact, adding intermediate states is not only a convenient way to keep a Markov process, but also an opportunity to go into further details of the biological mechanisms behind gene expression. Though a complete description is still out of reach, such promoter states may indeed represent physical configurations of chromatin (the compacting molecular structure around DNA in eukaryotic cells), for example during remodeling steps~\cite{Coulon2013,Zoller2015}.

These observations have motivated the introduction of “multistate” promoters, each state being associated with a particular rate of mRNA creation~\cite{Coulon2010,Zhou2012,Innocentini2013,Dattani2017}.
Accordingly, we consider a promoter with $n$ states ($n\geqslant 2$) represented by chemical species $\chem{S}_1, \dots, \chem{S}_n$, with transitions between states such that molecule numbers always satisfy $[\chem{S}_i] \in \{0,1\}$ for all $i$ and $[\chem{S}_1] + \cdots + [\chem{S}_n] = 1$.
Then, representing mRNA by a species $\chem{M}$, the expression model is defined by the following system of elementary reactions:
\begin{equation}\label{eq:chem_multistate}
\begin{cases}
\hspace{2mm}\chem{S}_i \xrightarrow[]{r_{i,j}} \chem{S}_j & \text{ for } i,j\in\lb 1,n\rb, \;i\neq j \vspace{1mm}  \\
\hspace{2mm}\chem{S}_i \xrightarrow[]{u_i} \chem{S}_i + \chem{M} & \text{ for } i\in\lb 1,n\rb \vspace{1mm} \\
\hspace{2mm}\chem{M} \xrightarrow[]{d_0} \varnothing &
\end{cases}
\end{equation}
with rates $r_{i,j} \geqslant 0$, $u_i \geqslant 0$ and $d_0 > 0$.
Importantly, two distinct scenarios can be considered for this model:
\begin{enumerate}
\vspace{-2mm}
\item the general case (e.g.,~\cite{Coulon2010,Innocentini2013});
\vspace{-2mm}
\item the particular case where only one $u_i$ is nonzero (e.g.,~\cite{Zhou2012,Zoller2015,Dattani2017}).
\end{enumerate}\vspace{-2mm}
Promoters that belong to the second case can be interpreted as having exactly one active state and $n-1$ inactive states: in line with the intuition presented above, we shall call them \emph{refractory} in the present paper. Note that in this view, the two-state model corresponds to a “trivial” refractory promoter.

Our main interest here is the stationary distribution of the mRNA quantity $[\chem{M}]$.
In~\cite{Innocentini2013}, the authors provided a general but implicit formula based on a recurrence relation, focusing on multimodality induced by distinct $u_i$ values. On the other hand, the authors in~\cite{Zhou2012} consider some particular refractory promoters (transition graph forming a cycle) and express the distribution in terms of generalized hypergeometric functions~\cite{Slater1966}, providing an implicit way to derive the parameter values. A further step is achieved in~\cite{Dattani2016}, where parameters are explicitly derived in a more particular case (irreversible cycle).

In this paper, we propose to gather, simplify and extend these results by adopting a unified viewpoint: the underlying philosophy is to “break down the noise”, that is, to decompose the complexity of the distribution into simpler layers. As suggested in~\cite{Dattani2017}, we use the Poisson representation~\cite{Gardiner1977} of system~\eqref{eq:chem_multistate}, which allows for combining approaches in~\cite{Innocentini2013} and~\cite{Zhou2012} by introducing a piecewise-deterministic Markov process (PDMP).
First, we reinterpret the main result of~\cite{Innocentini2013} as a projection of the PDMP joint distribution. We show a simplistic situation where this distribution is Dirichlet, yet providing some interesting insight into the general case.
Second, we simplify the main result of~\cite{Zhou2012} concerning cyclic refractory promoters, and generalize it to any refractory promoter by only assuming irreducible dynamics (i.e., for any $i\neq j$, there exists a path of reactions from $\chem{S}_i$ to $\chem{S}_j$ with positive reaction rates).
This refractory case exactly corresponds to marginals of the previous joint distribution.
Interestingly, the resulting class of univariate distributions generalizes the one consisting of products of Beta-distributed random variables, which also arises in statistics~\cite{Dufresne2010} and mathematical physics~\cite{Dunkl2013}. It is characterized by a set of parameters that are potentially complex and directly relate to spectral properties of the promoter transition matrix.

The paper is organized as follows. The mathematical formulation of system~\eqref{eq:chem_multistate} is introduced in~\cref{sec:model} and its Poisson representation is detailed in~\cref{sec:poisson}. Then, the underlying multivariate PDMP is presented in~\cref{sec:multidim} and the complete solution for refractory promoters is given in \cref{sec:refractory}. Finally, applications are shown in~\cref{sec:applications} and a discussion follows in~\cref{sec:discussion}.

\section{Basic mathematical model}
\label{sec:model}

For $t\geqslant 0$, let $E_t$ and $M_t$ respectively denote the promoter state ($E_t = i$ if $[\chem{S}_i]=1$, $i\in\lb 1,n \rb$) and the mRNA level ($M_t = [\chem{M}]$) at time $t$.
Throughout this paper, we adopt a semivectorial notation by encoding promoter states as components of $\R^n$ while keeping mRNA as a scalar: this will make our computations much easier and will essentially reduce the results to linear algebra.
We assume that system~\eqref{eq:chem_multistate} follows standard \emph{stochastic mass-action} kinetics, that is, $(E_t,M_t)_{t\geqslant 0}$ is a jump Markov process with state space $\lb 1,n \rb \times \N$ and generator $\mathcal{L}$ defined by
\begin{equation}\label{eq:full_gen_backward}
\mathcal{L}f(k) = d_0k[f(k-1) - f(k)] + C[f(k+1) - f(k)] + Qf(k)
\end{equation}
where $f(k) = (f_1(k),\dots,f_n(k))^\top \in \R^n$ represents functions $f:\lb 1,n \rb \times \N \to \R$, $C = \diag(u_1,\dots,u_n) \in \Mat{n}$ contains creation rates and $Q \in \Mat{n}$ is the \emph{promoter transition matrix} given by
\[Q_{i,j} = r_{i,j} \quad \text{for} \;\; i\neq j \quad \text{and} \quad Q_{i,i} = -\sum_{j\neq i} r_{i,j} .\]

In practice, we shall focus on distributions (meaning probability measures here) and therefore consider the adjoint operator of $\mathcal{L}$, denoted by $\Omega$ and defined by
\begin{equation}\label{eq:full_gen_forward}
\Omega g(k) = d_0[(k+1)g(k+1) - kg(k)] + C[g(k-1)\mathds{1}_{k>0} - g(k)] + Hg(k)
\end{equation}
where $H=Q^\top$ and $g = (g_1,\dots,g_n)^\top$ now stands for distributions $g$ on $\lb 1,n \rb \times \N$. The distribution $p(t) = \Prob_{(E_t,M_t)}$, represented by $p(t) = (p_1(\cdot,t),\dots,p_n(\cdot,t))^\top$ where $p_i(k,t) = \Prob(E_t=i,M_t=k)$, then evolves according to the well-known Kolmogorov forward equation:
\begin{equation}\label{eq:full_master}
\frac{d p}{d t} = \Omega p
\end{equation}
which is often called \emph{master equation} in this context. Note that~\eqref{eq:full_master} is the same master equation as in~\cite{Coulon2010} and~\cite{Innocentini2013}. Also, it is a natural generalization of the master equation considered in~\cite{Zhou2012}, which corresponds here to \emph{cyclic refractory promoters} (i.e., only one $u_i$ is nonzero and the undirected graph induced by $H$ is a $n$-cycle).
See~\cref{sec:refractory} for a graphical representation of cyclic and general refractory promoters.

As mentioned above, we assume that $Q$ is irreducible (and thus also $H$): this is sufficient to ensure that $p(t)$ converges as $t\to\infty$ to a unique stationary distribution (see~\cite{Peccoud1995} and references therein), which will be our main object of interest.
Finally, we set $d_0 = 1$, say in $\mathrm{h}^{-1}$, without loss of generality (equivalent to dividing~\eqref{eq:full_gen_backward} and~\eqref{eq:full_gen_forward} by $d_0$ and rescaling time by $1/d_0$) so the model is completely parametrized by
\[r = (r_{i,j})_{i, j \in\lb 1,n \rb,\, i\neq j} \quad \text{and} \quad u = (u_1,\dots,u_n) .\]

\section{Poisson representation}
\label{sec:poisson}

In this section, we motivate the introduction of an underlying process that is not only useful for computations, but also arises naturally as a fundamental part of the original process $(E_t,M_t)_{t\geqslant 0}$. Our approach is based on the Poisson representation, initially introduced by Gardiner and Chaturvedi~\cite{Gardiner1977} as a powerful ansatz-based technique for solving master equations.
As emphasized by the authors, this representation is particularly adapted to chemical birth-death processes because of the particular jump rate form implied by stochastic mass-action kinetics~\cite{Gardiner1977,Petrosyan2014}, and it has already been successfully applied to intracellular kinetics~\cite{Thomas2010} and more specifically to gene expression~\cite{Iyer-Biswas2014,Sugar2014,Schnoerr2016}.
In our case, contrary to the original approach~\cite{Gardiner1977} in which all species are included, and as also done tacitly in~\cite{Iyer-Biswas2014,Sugar2014}, we shall apply the Poisson representation only to the mRNA part (species $\chem{M}$) and not to the promoter part (species $\chem{S}_i$, $i\in\segn$). The representation then becomes more than just an ansatz by revealing an actual “hidden layer” that turns out to be a piecewise-deterministic Markov process.

\subsection{Notation and definitions}
\label{seq:poisson:notation}

In all that follows, $\mathcal{M}(\R_+)$ and $\mathcal{M}(\N)$ denote the real vector spaces of finite signed measures on $\R_+ = [0,\infty)$ and $\N = \{0,1,2,\dots\}$.
When they are nonnegative and have their total mass equal to~$1$, such measures are standard probability measures, termed \emph{distributions} here.
It is worth mentioning that an actual, precise consideration of spaces is not the point of this article, but the reader may find some details in~\cref{sec:spaces}.
Note that $\mathcal{M}(\N)$ simply corresponds to the space of real sequences whose series is absolutely convergent.

Intuitively, $\mathcal{M}(\R_+)$ and $\mathcal{M}(\N)$ will describe mRNA levels.
Let us now introduce three transforms that will be used extensively in this paper.
Given $\mu\in\mathcal{M}(\R_+)$, we define the \emph{Laplace transform} $L_\mu$ by
\begin{equation}
\label{eq:def_laplace}
L_\mu(s) = \int_0^\infty e^{sx} \intd{\mu}(x), \quad \forall s\in(-\infty,0] .
\end{equation}
Similarly, given $p\in\mathcal{M}(\N)$, we consider the \emph{generating function} $G_p$ defined by
\begin{equation}
\label{eq:def_generating}
G_p(z) = \sum_{k=0}^\infty p(k) z^k, \quad \forall z\in[-1,1] .
\end{equation}
The last one is the starting point of the Poisson representation: for $\mu \in \mathcal{M}(\R_+)$, we define $P\mu \in \mathcal{M}(\N)$ by
\begin{equation}
\label{eq:def_poisson}
P\mu(k) = \int_0^\infty \frac{x^k}{k!}e^{-x}\intd{\mu}(x), \quad \forall k\in\N,
\end{equation}
which gives us a linear operator $P : \mathcal{M}(\R_+) \to \mathcal{M}(\N)$. We may term this operator the \emph{Poisson transform} and call its image $\mathcal{P}$ the space of \emph{Poisson mixtures}.
When $\mu$ has a density $f$ with respect to the Lebesgue measure on $\R_+$, we consistently set $Pf = P\mu$.
The operator $P$ clearly preserves total mass, that is, $P\mu(\N) = \mu(\R_+)$. Moreover, $\mu \geqslant 0$ implies $P\mu \geqslant 0$ and thus $P$ maps distributions to distributions (such operators are sometimes called \emph{stochastic}~\cite{Rudnicki2017}). In this case, the most important fact is that one can draw $Y\sim P\mu$ using the hierarchical (aka Bayesian) model:
\begin{equation*}
\begin{aligned}
X &\sim \mu \\
Y|X &\sim \Poi(X)
\end{aligned}
\end{equation*}
where the second line stands for the conditional distribution $\Prob_{Y|X} = \Poi(X)$.
In this sense, $P\mu$ is a \emph{mixture of Poisson distributions} where $\mu$ is the {mixing distribution}.

\begin{remark}\label{rem:laplace_c}
It will be useful to extend the definition of $L_\mu(s)$ to $s\in\C$. Given $\mu\in\mathcal{M}(\R_+)$, by standard results of complex analysis $L_\mu$ is holomorphic on the half plane $\{s\in\C \,|\, \Re(s)<0\}$. If $\mu$ is compactly supported (e.g., $\mu = \Prob_X$ with $X\in[0,1]$), then $L_\mu$ can be defined on $\C$ and is an entire function.
\end{remark}

The basic Poisson representation consists in finding an evolution equation for $\mu$ by assuming the form $P\mu$ in~\eqref{eq:full_master}, implicitly expecting $\mu$ to be simpler: this approach hence benefits from a remarkable probabilistic interpretation, in contrast to many other ansatz techniques commonly used to solve such equations.
Besides, the following result noted by Feller~\cite{Feller1943} enlightens the correspondence between the generating function and the Laplace transform in the context of Poisson mixtures.
Importantly for us, it implies that $P$ is injective.

\begin{lemma}\label{lem:feller}
Let $\mu\in\mathcal{M}(\R_+)$. Then for all $z\in[-1,1]$,
\[G_{P\mu}(z) = L_\mu(z-1) .\]
In particular, $P$ induces a linear isomorphism from $\mathcal{M}(\R_+)$ onto $\mathcal{P}$.
\end{lemma}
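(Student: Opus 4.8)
The plan is to obtain the identity $G_{P\mu}(z)=L_\mu(z-1)$ by a one–line computation that exchanges a sum and an integral, and then to read off the isomorphism claim using the fact that the Laplace transform determines a finite signed measure on $\R_+$ (injectivity of $P$ being the only point needing more than bookkeeping).

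For the identity, I would start from definition~\eqref{eq:def_generating} and plug in~\eqref{eq:def_poisson}:
\[
G_{P\mu}(z) = \sum_{k=0}^\infty z^k \int_0^\infty \frac{x^k}{k!}\,e^{-x}\,\intd{\mu}(x), \qquad z\in[-1,1].
\]
I would then justify interchanging $\sum_k$ and $\int$ via Tonelli's theorem: since $|z|\leqslant 1$ one has $\sum_{k=0}^\infty \int_0^\infty \frac{|z|^k x^k}{k!}e^{-x}\,\intd{|\mu|}(x) = \int_0^\infty e^{(|z|-1)x}\,\intd{|\mu|}(x) \leqslant |\mu|(\R_+)<\infty$, so the double series–integral converges absolutely and Fubini applies. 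After the exchange, $\sum_k (zx)^k/k! = e^{zx}$ yields $G_{P\mu}(z) = \int_0^\infty e^{-x}e^{zx}\,\intd{\mu}(x) = \int_0^\infty e^{(z-1)x}\,\intd{\mu}(x)$, and since $z-1\leqslant 0$ the right-hand side is exactly $L_\mu(z-1)$ by~\eqref{eq:def_laplace}.

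For the second assertion, $P$ is linear and, by construction, maps $\mathcal{M}(\R_+)$ onto its image $\mathcal{P}$, so it remains to prove injectivity. Suppose $P\mu=0$. Then $G_{P\mu}\equiv 0$ on $[-1,1]$, hence by the identity just proved $L_\mu(s)=0$ for all $s\in[-2,0]$; by \cref{rem:laplace_c}, $L_\mu$ is holomorphic on the connected open set $\{\Re(s)<0\}$ and vanishes on the segment $[-2,0)$, which has accumulation points there, so the identity theorem forces $L_\mu\equiv 0$ on all of $\{\Re(s)<0\}$. It then remains to deduce $\mu=0$ from $L_\mu|_{(-\infty,0)}\equiv 0$, i.e.\ to invoke injectivity of the Laplace transform on $\mathcal{M}(\R_+)$. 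I would dispatch this explicitly: fix $t_0<0$ and push $\mu$ forward through the Borel isomorphism $x\mapsto e^{t_0 x}$ of $[0,\infty)$ onto $(0,1]$; the resulting finite signed measure $\nu$ on $(0,1]$ satisfies $\int w^k\,\intd{\nu}(w)=L_\mu(kt_0)=0$ for every $k\geqslant 1$, while its total mass equals $\mu(\R_+)=P\mu(\N)=0$, so all moments of $\nu$ vanish; Weierstrass approximation on $[0,1]$ then gives $\int f\,\intd{\nu}=0$ for all $f\in C([0,1])$, hence $\nu=0$ and therefore $\mu=0$. (Alternatively one may simply cite the classical uniqueness theorem for Laplace transforms.)

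I do not expect a genuine obstacle: the only places requiring a little care are the Tonelli/Fubini justification for the term-by-term integration and the passage from ``$L_\mu$ vanishes on an interval'' to ``$\mu=0$'', both of which are entirely standard — the former because $|z|\leqslant 1$ keeps the integrand dominated by $1$, and the latter because the Laplace transform extends holomorphically, so vanishing on a segment already forces vanishing on the whole half-plane and then injectivity applies.
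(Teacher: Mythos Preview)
Your proof is correct and follows essentially the same route as the paper: a Fubini/Tonelli interchange to obtain $G_{P\mu}(z)=L_\mu(z-1)$, followed by the observation that $P\mu=0$ forces $L_\mu$ to vanish on $[-2,0]$ and hence (via analyticity) everywhere on the left half-plane, so injectivity of the Laplace transform gives $\mu=0$. The only difference is cosmetic: the paper handles Fubini via the Jordan decomposition and cites a reference for Laplace-transform injectivity, whereas you use $|\mu|$ and supply a self-contained Weierstrass argument.
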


\begin{proof}
By Fubini's theorem (valid for $z\in[-1,1]$) applied on measures $\mu^+$ and $\mu^-$ of the Jordan decomposition $\mu = \mu^+ - \mu^-$, we directly get
\[G_{P\mu}(z) = \sum_{k=0}^\infty \int_0^\infty \frac{(zx)^k}{k!}e^{-x}\intd{\mu}(x) = \int_0^\infty e^{(z-1)x}\intd{\mu}(x) = L_\mu(z-1) .\]
Hence, if $P\mu = 0$, then $L_{\mu}(z) = 0$ for all $z\in[-2,0]$, and thus also for all $z\in(-\infty,0]$ by~\cref{rem:laplace_c}. As $\mu\mapsto L_\mu$ is injective on $\mathcal{M}(\R_+)$ (e.g.,~\cite[Theorem 15.6]{Klenke2014}), it follows that $\mu = 0$. Hence, $P$ is injective and we have $\mathcal{M}(\R_+) \simeq \mathcal{P}$.
\end{proof}

\begin{remark}\label{rem:poisson_gen}
Having this result in mind, it is no surprise that a very common way to solve master equations such as~\eqref{eq:full_master} consists in deriving an evolution equation for the generating function and then making the change of variable $s = z-1$. It seems to be less known that the outcome is nothing but the evolution equation satisfied by the Laplace transform of the mixing distribution within the Poisson representation.
\end{remark}

Finally, we just need to extend $\mathcal{M}(\R_+)$ and $\mathcal{M}(\N)$ in order to add a description of promoter states.
In line with the semivectorial definitions of $\mathcal{L}$ and $\Omega$ in~\eqref{eq:full_gen_backward} and~\eqref{eq:full_gen_forward}, we represent finite signed measures on $\lb 1,n \rb \times \R_+$ and $\lb 1,n \rb \times \N$ by
\[\mathcal{M}_n(\R_+) = \mathcal{M}(\R_+)^n \quad \text{and} \quad \mathcal{M}_n(\N) = \mathcal{M}(\N)^n .\]
The Laplace transform and the generating function are then naturally extended as
\[L_\mu = (L_{\mu_1},\dots,L_{\mu_n})^\top \quad \text{and} \quad G_p = (G_{p_1},\dots,G_{p_n})^\top \]
for $\mu=(\mu_1,\dots,\mu_n)^\top \in \mathcal{M}_n(\R_+)$ and $p=(p_1,\dots,p_n)^\top \in \mathcal{M}_n(\N)$, as well as the Poisson transform $P:\mathcal{M}_n(\R_+) \to \mathcal{M}_n(\N)$ whose image is now denoted by $\mathcal{P}_n$. Clearly, \cref{lem:feller} still holds and $P$ induces an isomorphism from $\mathcal{M}_n(\R_+)$ onto $\mathcal{P}_n$.

\subsection{Distribution viewpoint}
\label{sec:distrib:viewpoint}

The most intuitive way to derive the Poisson representation, assuming that $\mu$ admits a smooth density function of $y\in\R_+$, consists in simply injecting Poisson mixtures into equation~\eqref{eq:full_master} and then integrating by parts (see~\cref{sec:ansatz:approach}). If the boundary terms vanish, one finds that $p = P\mu$ is a solution of~\eqref{eq:full_master} if and only if
\begin{equation}\label{eq:pdmp_master}
\dr_t \mu = \dr_y(y\mu)- C\dr_y \mu + H\mu .
\end{equation}
This is the key idea in~\cite{Gardiner1977}: instead of using a series expansion, we obtain an \emph{exact} representation as a time-dependent mixture of Poisson distributions. In our case, the evolution equation~\eqref{eq:pdmp_master} satisfied by $\mu$ is a Kolmogorov forward equation associated with the new operator
\begin{equation}\label{eq:pdmp_gen_forward}
\widetilde{\Omega}\mu = -\dr_y[(C - yI)\mu] + H\mu ,
\end{equation}
which is the adjoint of
\begin{equation}\label{eq:pdmp_gen_backward}
\widetilde{\mathcal{L}} f = (C - yI)\dr_yf + Qf .
\end{equation}
A comparison of $\widetilde{\mathcal{L}}$ with~\eqref{eq:full_gen_backward} reveals that we made significant progress by going from a discrete to a continuous description. Notably, we thereby obtain the generator of a process $(E_t,Y_t)_{t\geqslant 0}$ that is a typical PDMP of state space $\lb 1,n \rb \times \R_+$, with $E_t$ being the same as before and $Y_t$ following the (random) differential equation
\begin{equation}\label{eq:ode_univariate}
\dot{Y}_t = \overline{u}(E_t) - Y_t
\end{equation}
where $\overline{u}(i) = u_i$ for $i\in\lb 1,n \rb$.
In other words, given the promoter state, the continuous variable $Y_t$ follows the traditional \emph{deterministic mass-action} kinetics of $\chem{M}$ in system~\eqref{eq:chem_multistate}: an example of such underlying PDMP is shown in~\cref{fig:sample_traj}.
The mixing distribution thus evolves exactly as we would expect when considering $[\chem{M}]$ continuous while keeping the $[\chem{S}_i]$ discrete, and indeed~\eqref{eq:pdmp_master}
can be rewritten as a simple system of coupled transport equations
\[\dr_t \mu + \dr_y[(C - yI)\mu] = H\mu\]
for which~\eqref{eq:ode_univariate} is the characteristic curve for each vector component.

\begin{figure}[tbp]
\begin{center}
\resizebox{\textwidth}{!}{\begin{tikzpicture}
\node at (0,0) {\includegraphics[width=\textwidth]{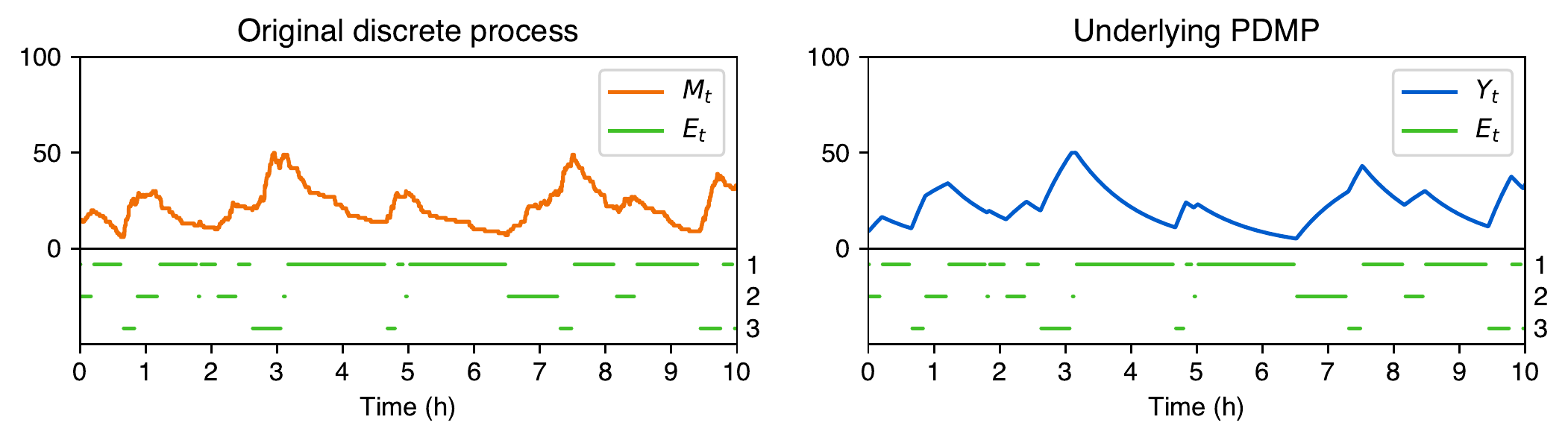}};
\node at (0.25,-3.7) {\resizebox{!}{3.3cm}{%
\begin{tikzpicture}[scale=0.9, line width = 1.2]
\tikzstyle{active}=[draw, circle, minimum width=9mm, color=gris, fill=gray!5]
\tikzstyle{inactive}=[draw, circle, minimum width=9mm, color=rouge]
\tikzstyle{bleft}=[bend left=13]
\tikzstyle{arrow}=[{->},>=latex]
\node[active] (S1) at (2.00,-0.00) {$\chem{S}_1$};
\node[active] (S2) at (-1.00,-1.73) {$\chem{S}_2$};
\node[active] (S3) at (-1.00,1.73) {$\chem{S}_3$};
\draw[arrow] (S1) to[bleft] node[midway, below right]{\color{black}\small $1$} (S2);
\draw[arrow] (S2) to[bleft] node[midway, above]{\color{black}\small $2$} (S1);
\draw[arrow] (S2) to[bleft] node[midway, left]{\color{black}\small $0.5$} (S3);
\draw[arrow] (S3) to[bleft] node[midway, right]{\color{black}\small $1$} (S2);
\draw[arrow] (S3) to[bleft] node[midway, above right]{\color{black}\small $2$} (S1);
\draw[arrow] (S1) to[bleft] node[midway, below]{} (S3);
\node (r13) at (0.14,0.35) {\small $0.5$};
\node[active] (S1) at (-9,1.45) {$\chem{S}_1$};
\node[active] (S2) at (-9,0) {$\chem{S}_2$};
\node[active] (S3) at (-9,-1.45) {$\chem{S}_3$};
\node[right] (e1) at (-8.2,1.45) {\large $\dot{Y}_t = - Y_t$};
\node[right] (e1) at (-8.2,0) {\large $\dot{Y}_t = 50 - Y_t$};
\node[right] (e1) at (-8.2,-1.45) {\large $\dot{Y}_t = 100 - Y_t$};
\end{tikzpicture}
}};
\end{tikzpicture}}
\caption{Sample paths of the original process and the underlying piecewise-deterministic process for an example of multistate promoter: $n=3$, $u=(0,50,100)$, $r_{2,1}=r_{3,1}=2$, $r_{1,2}=r_{3,2}=1$ and $r_{1,3}=r_{2,3}=0.5$. Here the paths of $(E_t,M_t)$ and $(E_t,Y_t)$ are generated using the same path of $E_t$.}
\label{fig:sample_traj}
\end{center}
\end{figure}

Although the previous method is only heuristic (boundary terms in the integration by parts may indeed not vanish), it is possible to get the same outcome rigorously using a dual approach related to~\cref{rem:poisson_gen}. More precisely, if $p(t)$ is the distribution of $(E_t,M_t)$ and $\mu(t)$ is the distribution of $(E_t,Y_t)$, then from the definition~\eqref{eq:full_gen_forward} of $\Omega$, the generating function $g(z,t) = G_{p(t)}(z)$ satisfies the evolution equation
\begin{equation}\label{eq:pde_generating}
\dr_t g(z,t) + (z-1)\dr_z g(z,t) = ((z-1)C+ H)g(z,t)
\end{equation}
while from the definition~\eqref{eq:pdmp_gen_forward} of $\widetilde{\Omega}$, the Laplace transform $\phi(s,t) = L_{\mu(t)}(s)$ satisfies
\begin{equation}\label{eq:pde_laplace}
\dr_t \phi(s,t) + s\dr_s \phi(s,t) = (sC + H)\phi(s,t) .
\end{equation}
Clearly, \eqref{eq:pde_generating} and~\eqref{eq:pde_laplace} perfectly coincide up to the change of variable $s = z - 1$. As a result (see~\cref{sec:dual:approach} for details), the dynamics of $p(t)$ coincide on the space of Poisson mixtures with the dynamics of $\mu(t)$ in the following sense.

\begin{proposition}\label{prop:stable_space}
Let $(S_t)_{t\geqslant 0}$ and $(\widetilde{S}_t)_{t\geqslant 0}$ be the operator semigroups generated by $\Omega$ and $\widetilde{\Omega}$, that is, for any $p_0\in \mathcal{M}_n(\N)$ and $\mu_0 \in \mathcal{M}_n(\R_+)$:
\begin{itemize}
\vspace{-2mm}
\item $p(t) = S_tp_0$ is the solution of~\eqref{eq:full_master} with initial condition $p(0) = p_0$;\vspace{-2mm}
\item $\mu(t) = \widetilde{S}_t\mu_0$ is the solution of~\eqref{eq:pdmp_master} with initial condition $\mu(0) = \mu_0$.
\end{itemize}\vspace{-2mm}
Then for all $t\geqslant 0$, the space of Poisson mixtures $\mathcal{P}_n \subset \mathcal{M}_n(\N)$ is an invariant subspace of $S_t$ and we have the following commutative diagram:
\[\begin{tikzcd}
\mathcal{M}_n(\R_+) \arrow{d}{P} \arrow{r}{\widetilde{S}_t} & \mathcal{M}_n(\R_+) \arrow{d}{P} \\
\mathcal{P}_n \arrow{r}{S_t} & \mathcal{P}_n
\end{tikzcd}\]
that is, $P^{-1} S_t P = \widetilde{S}_t$ where $P^{-1}$ is well defined on $\mathcal{P}_n$.
\end{proposition}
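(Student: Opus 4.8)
The plan is to transport both flows to the level of transforms — the generating function on the discrete side and the Laplace transform of the mixing measure on the continuous side — where, as observed around \eqref{eq:pde_generating}–\eqref{eq:pde_laplace}, the two dynamics become \emph{the same} first-order linear PDE under the substitution $s = z - 1$, and then to pull the resulting identity back through the isomorphism of \cref{lem:feller}.

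Concretely, I would fix $\mu_0 \in \mathcal{M}_n(\R_+)$, set $p_0 = P\mu_0 \in \mathcal{P}_n$, and write $\mu(t) = \widetilde{S}_t \mu_0$, $p(t) = S_t p_0$, together with $\phi(s,t) = L_{\mu(t)}(s)$ and $g(z,t) = G_{p(t)}(z)$. By \eqref{eq:pde_generating} and \eqref{eq:pde_laplace}, $g$ solves $\dr_t g + (z-1)\dr_z g = ((z-1)C + H)g$ on $z\in[-1,1]$ with $g(\cdot,0) = G_{p_0}$, and $\phi$ solves $\dr_t\phi + s\,\dr_s\phi = (sC+H)\phi$ on $s\in(-\infty,0]$. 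I would then set $\psi(z,t) := \phi(z-1,t)$, which is well defined since $z-1\in[-2,0]$, and observe that the substitution $s = z-1$ turns the $\phi$-equation into exactly the $g$-equation, while $\psi(z,0) = L_{\mu_0}(z-1) = G_{P\mu_0}(z) = G_{p_0}(z)$ by \cref{lem:feller}. This is a transport equation whose characteristics $\dot z = z-1$ issued from $[-1,1]$ remain in $[-1,1]$ for all positive times, and along each of them $g$ obeys the linear ODE $h' = ((z(t)-1)C+H)h$, which is uniquely determined by its value at $t=0$; hence $g = \psi$, that is,
\[G_{p(t)}(z) = L_{\mu(t)}(z-1) = G_{P\mu(t)}(z) \qquad \text{for all } z\in[-1,1],\ t\geqslant 0,\]
the last equality again by \cref{lem:feller}. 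Since a generating function determines an element of $\mathcal{M}_n(\N)$, this yields $p(t) = P\mu(t)$, i.e. $S_t P\mu_0 = P\widetilde{S}_t\mu_0$.

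This single identity gives both conclusions. Every $p_0 \in \mathcal{P}_n$ is $P\mu_0$ for some $\mu_0$, so $S_t p_0 = P(\widetilde{S}_t\mu_0) \in \mathcal{P}_n$ and $\mathcal{P}_n$ is $S_t$-invariant; and since $P$ is injective on $\mathcal{M}_n(\R_+)$ by \cref{lem:feller}, $P^{-1}$ is well defined on $\mathcal{P}_n$ and the identity reads $P^{-1} S_t P = \widetilde{S}_t$, which is the asserted commutation.

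The genuinely delicate points — to be treated in \cref{sec:dual:approach} rather than here — are, first, the rigorous justification that $g$ and $\phi$ really satisfy \eqref{eq:pde_generating} and \eqref{eq:pde_laplace} classically, which requires interchanging $\dr_t$ and $\dr_z$ (resp. $\dr_s$) with the series (resp. the integral) defining the transforms via dominated convergence on the Jordan components, together with the well-posedness of \eqref{eq:full_master} and \eqref{eq:pdmp_master} underlying the very existence of $S_t$ and $\widetilde{S}_t$; and second, the uniqueness for the transport Cauchy problem, where some care is needed near $z = 1$ (i.e. $s = 0$), the fixed point of the characteristic flow and the only place where the Laplace transform is a priori merely continuous rather than smooth. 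Granting these, the argument above is pure bookkeeping with \cref{lem:feller} and the method of characteristics.
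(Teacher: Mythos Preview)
Your proposal is correct and follows essentially the same route as the paper: transport both evolutions to the level of transforms, observe that \eqref{eq:pde_generating} and \eqref{eq:pde_laplace} coincide under $s=z-1$, match initial data via \cref{lem:feller}, and pull the resulting identity $G_{p(t)}(z)=L_{\mu(t)}(z-1)$ back through the injectivity of $P$. The paper's argument in \cref{sec:dual:approach} is just a terser version of yours; the only notable difference is that the paper justifies \eqref{eq:pde_generating}--\eqref{eq:pde_laplace} by pairing the backward semigroups $T_t$, $\widetilde{T}_t$ against the test functions $f_z(i,k)=\mathds{1}_{i=\cdot}\,z^k$ and $f_s(i,y)=\mathds{1}_{i=\cdot}\,e^{sy}$ for $z\in(-1,1)$ and $s\in(-\infty,0)$, which lands directly in $c_0^n$ and $\mathcal{C}_0^n$ and sidesteps the boundary issue at $z=1$ that you flag.
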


It is worth noticing that since $(E_t,M_t)_{t\geqslant 0}$ is ergodic~\cite{Peccoud1995}, its unique stationary distribution belongs to the space $\mathcal{P}_n$ which is therefore clearly a fundamental invariant subspace. It is not very common to know such a nontrivial subspace when dealing with infinite-dimensional semigroups, and this interesting result strongly suggests the introduction of the underlying process $(E_t,Y_t)_{t\geqslant 0}$. Unsurprisingly, it is also ergodic~\cite{Benaim2012} so the same Poisson representation holds in stationary regime.

\begin{corollary}\label{corol:stable_space}
The stationary distribution of the original process $(E_t,M_t)_{t\geqslant 0}$ is the Poisson mixture $p=P\mu$ where $\mu$ is that of $(E_t,Y_t)_{t\geqslant 0}$.
\end{corollary}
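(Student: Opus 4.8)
The plan is to read the corollary off directly from \cref{prop:stable_space}, combined with the ergodicity of the two processes. First I would let $\mu\in\mathcal{M}_n(\R_+)$ denote the stationary distribution of the PDMP $(E_t,Y_t)_{t\geqslant 0}$: it exists, is unique, and satisfies $\widetilde{S}_t\mu=\mu$ for all $t\geqslant 0$, because this process is ergodic~\cite{Benaim2012}. Being a probability measure on $\lb 1,n\rb\times\R_+$, it indeed identifies with an element of $\mathcal{M}(\R_+)^n=\mathcal{M}_n(\R_+)$, so that the intertwining of \cref{prop:stable_space} applies to it.

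Next I would set $p:=P\mu$. Since $P$ preserves total mass and nonnegativity (see~\cref{seq:poisson:notation}), $p$ is a genuine distribution on $\lb 1,n\rb\times\N$, lying in $\mathcal{P}_n$. Applying the commutative diagram of \cref{prop:stable_space}, that is $S_tP=P\widetilde{S}_t$ on $\mathcal{M}_n(\R_+)$, I obtain for every $t\geqslant 0$
\[S_tp = S_tP\mu = P\widetilde{S}_t\mu = P\mu = p,\]
so $p$ is a stationary distribution of the original process $(E_t,M_t)_{t\geqslant 0}$. Finally, since $Q$ (hence $H$) is irreducible, $(E_t,M_t)_{t\geqslant 0}$ is ergodic and therefore admits a \emph{unique} stationary distribution~\cite{Peccoud1995}; it must then coincide with $p=P\mu$, which is the claim. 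As an aside, the converse direction is equally short and confirms consistency: if $p^\ast$ is the stationary distribution of $(E_t,M_t)_{t\geqslant 0}$ then $p^\ast\in\mathcal{P}_n$ (as noted after \cref{prop:stable_space}), so $p^\ast=P\nu$ for a unique $\nu\in\mathcal{M}_n(\R_+)$, and $P\nu=p^\ast=S_tp^\ast=S_tP\nu=P\widetilde{S}_t\nu$ forces $\widetilde{S}_t\nu=\nu$ by injectivity of $P$ (\cref{lem:feller}), whence $\nu=\mu$.

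I do not expect a genuine obstacle here: the substance is entirely contained in \cref{prop:stable_space}, and this corollary is just the remark that an invariant subspace carrying the unique equilibrium lets one transport stationarity through the isomorphism $P$. The only points needing a little care are bookkeeping ones — checking that $\mu$ really lies in $\mathcal{M}_n(\R_+)$ so that the semigroup intertwining is applicable, and matching the fixed-point notion of stationarity used in \cref{prop:stable_space} with the $t\to\infty$ convergence notion invoked when citing ergodicity of the two processes — and both are covered by the references and by the functional-analytic remarks of \cref{sec:spaces}.
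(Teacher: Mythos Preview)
Your proposal is correct and follows exactly the approach the paper intends: the corollary is stated without a separate proof, being presented as an immediate consequence of \cref{prop:stable_space} together with the ergodicity of both processes (the paper simply notes that $(E_t,Y_t)_{t\geqslant 0}$ ``is also ergodic~\cite{Benaim2012} so the same Poisson representation holds in stationary regime''). Your write-up merely makes explicit the one-line computation $S_tP\mu=P\widetilde{S}_t\mu=P\mu$ and invokes uniqueness, which is precisely what the paper leaves to the reader.
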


\begin{remark}
\label{rem:complex_measure}
Any chemical system leads to an analog of~\eqref{eq:pde_laplace}, but in general its solution is not the Laplace transform of a probability measure on $\R_+$ even when imposed at $t=0$ (see examples in~\cite{Gardiner1977}). Alternatively, one can always find a representation using a probability measure on $\C$, but then there is no straightforward analog of~\cref{prop:stable_space} since $P$ is not injective anymore~\cite[section 7.7.4]{Gardiner2004}.
\end{remark}

In our case, it is even possible to obtain a stronger result describing not only distributions but also sample paths: this approach appears in~\cite{Dattani2017} but we slightly adapt it here to our “invariant subspace” viewpoint.

\subsection{Path-based approach}
\label{sec:poisson:path}
In line with the chemical system~\eqref{eq:chem_multistate} and noticing that $(E_t)_{t\geqslant 0}$ is itself a jump Markov process with generator $Q$, one may alternatively consider $(M_t)_{t\geqslant 0}$ as a birth-death process in random environment $(E_t)_{t\geqslant 0}$, which can be described by a scalar, conditional master equation (see~\cref{sec:path:approach}).
The conditional generating function of mRNA given a promoter path is then defined by
\[\overline{g}(z,t) = \Esp\left[z^{M_t}|(E_\tau)_{\tau\geqslant 0}\right] = \Esp\left[z^{M_t}|(E_\tau)_{\tau\in[0,t]}\right] \]
and it satisfies the following partial differential equation:
\begin{equation}\label{eq:pde_generating_cond}
\dr_t \overline{g}(z,t) + (z-1)\dr_z \overline{g}(z,t) = (z-1)\overline{u}(E_t)\overline{g}(z,t) .
\end{equation}
This is just the analog of~\eqref{eq:pde_generating}, but much easier to solve since it is now a scalar transport equation. Using the standard method of characteristics, we get the following result.

\begin{proposition}\label{prop:generating}
Let $y_0 \in \R_+$. If $\overline{g}(z,0) = \exp(y_0(z-1))$, then we have
\[\overline{g}(z,t) = \exp(Y_t(z-1)) \]
for all $t\geqslant 0$, where
\[Y_t = e^{-t}y_0 +  \int_0^t \overline{u}(E_\tau) e^{\tau-t} \intd{\tau}\]
is the unique solution of the differential equation $\eqref{eq:ode_univariate}$ such that $Y_0 = y_0$.
\end{proposition}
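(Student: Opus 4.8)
The plan is to solve the scalar linear transport equation~\eqref{eq:pde_generating_cond} by the method of characteristics, using the promoter path $(E_\tau)_{\tau\geqslant 0}$ as a given (deterministic) input. Observe that~\eqref{eq:pde_generating_cond} has the same left-hand side $\dr_t\overline g + (z-1)\dr_z\overline g$ as~\eqref{eq:pde_generating}, so the natural characteristic variable is $s=z-1$; equivalently, one follows curves $t\mapsto z(t)$ with $\dot z = z-1$, i.e. $z(t)-1 = e^{t-t_0}(z_0-1)$. Along such a curve the PDE reduces to an ODE in $t$ for $\overline g$, whose coefficient involves $\overline u(E_t)$, and the point is that this ODE is \emph{exactly solvable} by an integrating factor even though $t\mapsto\overline u(E_t)$ is only piecewise constant.

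\medskip

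First I would make the exponential ansatz explicit: seek a solution of the form $\overline g(z,t)=\exp(\psi(t)(z-1))$ for some (random, path-dependent) function $\psi$, which is compatible with the initial condition $\overline g(z,0)=\exp(y_0(z-1))$ provided $\psi(0)=y_0$. Substituting this ansatz into~\eqref{eq:pde_generating_cond} and using $\dr_t\overline g = \psi'(t)(z-1)\overline g$, $\dr_z\overline g = \psi(t)\overline g$, one gets after dividing by $(z-1)\overline g$ (valid for $z\neq 1$, the case $z=1$ being trivial since $\overline g(1,t)=1$) the scalar ODE
\[
\psi'(t) = \overline u(E_t) - \psi(t),
\]
which is precisely~\eqref{eq:ode_univariate} with $Y_t$ renamed $\psi(t)$. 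Hence the ansatz is consistent and reduces the problem to this linear ODE.

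\medskip

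Second I would solve this ODE. Since $t\mapsto\overline u(E_t)$ is locally bounded and piecewise continuous (it is constant between the jumps of the pure-jump process $E$), the linear equation $\psi'=\overline u(E_t)-\psi$ with $\psi(0)=y_0$ has a unique absolutely continuous solution given by the variation-of-constants formula; multiplying by the integrating factor $e^{t}$ and integrating yields
\[
Y_t = e^{-t}y_0 + \int_0^t \overline u(E_\tau)\,e^{\tau-t}\,\intd{\tau},
\]
which is the stated expression. It only remains to argue uniqueness of the PDE solution in the relevant class so that the ansatz solution is \emph{the} solution: this follows from the standard method-of-characteristics argument, namely that $(z-1)\mapsto$ constant along $z(t)-1=e^{t-t_0}(z_0-1)$ transports the initial data uniquely, combined with the explicit integrating-factor solution along each characteristic; alternatively one invokes uniqueness for~\eqref{eq:pde_generating_cond} in $\mathcal M_n$ as already used for~\eqref{eq:pde_generating} in \cref{prop:stable_space}. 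I expect the only genuinely delicate point to be bookkeeping about regularity of $\psi$ at the jump times of $E$ (so that "unique solution of the differential equation" is interpreted in the Carathéodory sense, i.e. as the absolutely continuous function satisfying the integral equation), but this is routine for PDMPs and causes no real obstacle.
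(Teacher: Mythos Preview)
Your proposal is correct and matches the paper's approach: the paper simply states that the result follows from ``the standard method of characteristics'' applied to~\eqref{eq:pde_generating_cond}, and your exponential ansatz $\overline g(z,t)=\exp(\psi(t)(z-1))$ is precisely how that method plays out here, reducing the transport PDE to the scalar ODE~\eqref{eq:ode_univariate} which you then solve by variation of constants. Your care about uniqueness (Carath\'eodory solutions at the jump times of $E$) is more explicit than the paper, but the underlying argument is the same.
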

Such $Y_t$ is well defined since $t\mapsto\overline{u}(E_t)$ is piecewise constant, and we can construct $(E_t,M_t)_{t\geqslant 0}$ and $(E_t,Y_t)_{t\geqslant 0}$ using the same path of $(E_t)_{t\geqslant 0}$ as in~\cref{fig:sample_traj}. In this case, if $M_0 \sim \Poi(y_0)$ is independent of $E_0$, then by~\cref{prop:generating},
\[\Prob_{M_t | (E_\tau)_{\tau\in[0,t]}} = \Prob_{M_t | Y_t}\]
and more specifically if $Y_0$ is independent of $E_0$ and $M_0|Y_0 \sim \Poi(Y_0)$, we get
\begin{equation}\label{eq:rep_pdmp}
\forall t \geqslant 0, \quad M_t | Y_t \sim \Poi(Y_t) .
\end{equation}
One must stay aware that the $M_t$ are not independent, even conditionally on $(Y_t)_{t\geqslant 0}$. However, we also obtain
$\Esp[(M_t)_{t\geqslant 0}|(E_t)_{t\geqslant 0}] = \Esp[(M_t)_{t\geqslant 0}|(Y_t)_{t\geqslant 0}] = (Y_t)_{t\geqslant 0}$
and thus, as clearly perceptible in~\cref{fig:sample_traj}, the whole path $(M_t)_{t\geqslant 0}$ can be interpreted as small Poisson-type fluctuations around $(Y_t)_{t\geqslant 0}$ which itself describes the core part of the dynamics.
This link between the two processes is in fact not specific to our choice of promoter dynamics: see~\cite{Dattani2017} for a more general presentation.

\section{Underlying multivariate structure}
\label{sec:multidim}

Following the Poisson representation, our interest is now the process $(Y_t)_{t\geqslant 0}$ defined by~\eqref{eq:ode_univariate}. In this section, we slightly change our point of view in order to reveal interesting symmetries.
More precisely, we introduce a multivariate process $(X_t)_{t\geqslant 0} = (X_{1,t}, \dots, X_{n,t})_{t\geqslant 0}$ with state space $\R_+^n = (\R_+)^n$, such that $X_{0}\in\Delta^{n-1}$, the $(n-1)$-simplex defined by
\[\Delta^{n-1} = \left\{x\in \R_+^n \,|\, x_1+\cdots+x_n = 1\right\} ,\]
and built from $(E_t)_{t\geqslant 0}$ so that when $E_t = i$,
\begin{equation}\label{eq:ode_multivariate}
\dot{X}_{j,t} = - X_{j,t} \; \text{ for } j\neq i \quad \text{and} \quad \dot{X}_{i,t} = \sum_{j\neq i} X_{j,t} .
\end{equation}
Regarding the original chemical formulation~\eqref{eq:chem_multistate}, these equations simply correspond to deterministic mass-action kinetics of species $\chem{X}_1,\dots,\chem{X}_n$ following reactions
\[\chem{S}_i + \chem{X}_j \xrightarrow[]{d_0} \chem{S}_i + \chem{X}_i \quad \text{ for } \, i,j\in\lb 1,n\rb, \; i\neq j ,\]
with $d_0=1$ in this article. The case $n=3$ is shown in~\cref{fig:multivariate}, with a sample path of $(E_t,X_t)_{t\geqslant 0}$ based on the same $(E_t)_{t\geqslant 0}$ as in~\cref{fig:sample_traj}.
In particular, it is easy to check from~\eqref{eq:ode_multivariate} that $X_{1,t} + \cdots + X_{n,t}$ is conserved, and thus $X_t\in\Delta^{n-1}$ for all $t\geqslant 0$.
The main point of introducing $X_t$ is the following result.

\begin{proposition}\label{prop_multivariate}
Let $u = (u_1,\dots,u_n) \in \R_+^n$ denote the mRNA creation rates. Then for all $t\geqslant 0$,
\begin{equation}\label{eq:rep_proj}
M_t | X_t \sim \Poi(u\cdot X_t)
\end{equation}
where $u\cdot X_t = u_1 X_{1,t} + \cdots + u_n X_{n,t}$, whenever $M_0 | X_0 \sim \Poi(u\cdot X_0)$.
\end{proposition}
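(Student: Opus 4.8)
The plan is to reduce the multivariate identity \eqref{eq:rep_proj} to the univariate representation \eqref{eq:rep_pdmp}, by recognizing the scalar $u\cdot X_t$ as the process $Y_t$ of \eqref{eq:ode_univariate} driven by the same promoter path.

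The first step is a short ODE computation. I would differentiate $t\mapsto u\cdot X_t$ along \eqref{eq:ode_multivariate}: on an interval where $E_t=i$ this gives $\frac{d}{dt}(u\cdot X_t)=u_i\sum_{j\neq i}X_{j,t}-\sum_{j\neq i}u_jX_{j,t}$, and since $X_t\in\Delta^{n-1}$ is conserved we have $\sum_{j\neq i}X_{j,t}=1-X_{i,t}$, so the right-hand side collapses to $u_i-u\cdot X_t=\overline{u}(E_t)-u\cdot X_t$. Thus $u\cdot X_t$ solves \eqref{eq:ode_univariate}; because $t\mapsto\overline{u}(E_t)$ is piecewise constant this ODE has a unique solution for each initial value, so coupling $(E_t,X_t)_{t\geqslant 0}$ and $(E_t,Y_t)_{t\geqslant 0}$ through the same path of $(E_t)_{t\geqslant 0}$ with $Y_0=u\cdot X_0$ forces $Y_t=u\cdot X_t$ for every $t\geqslant 0$.

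The second step transfers the Poisson initial condition and invokes \cref{prop:generating} conditionally on $X_0$. Building $M_0$ from $X_0$ as in the univariate construction --- i.e.\ $M_0\mid X_0\sim\Poi(u\cdot X_0)$, with $M_0$ otherwise independent of the promoter path --- amounts to $\Esp[z^{M_0}\mid(E_\tau)_{\tau\in[0,t]},X_0]=\exp((u\cdot X_0)(z-1))$. Applying \cref{prop:generating} conditionally on $X_0$ with $y_0=u\cdot X_0$, and using $Y_t=u\cdot X_t$ from the first step, gives $\Esp[z^{M_t}\mid(E_\tau)_{\tau\in[0,t]},X_0]=\exp((u\cdot X_t)(z-1))$. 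Finally, since $\Poi(u\cdot X_t)$ is a measurable function of $X_t$ alone, the tower property yields $\Esp[z^{M_t}\mid X_t]=\exp((u\cdot X_t)(z-1))$, which is exactly \eqref{eq:rep_proj}.

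The ODE computation and the final conditioning are both routine; the one point that needs a little care is the second step, namely checking that the conditional hypothesis on $(M_0,X_0)$ genuinely supplies the initial data required by \cref{prop:generating} even though $x\mapsto u\cdot x$ is not injective --- hence keeping $X_0$ inside the conditioning until the last line. A fully distributional alternative would mimic \cref{prop:stable_space}, comparing the forward generator of $(E_t,X_t)_{t\geqslant 0}$ with that of $(E_t,M_t)_{t\geqslant 0}$ through a suitable pushforward--Poisson transform, but the path-based route above is shorter given \cref{prop:generating}.
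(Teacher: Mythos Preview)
Your proposal is correct and follows essentially the same approach as the paper: compute $\frac{d}{dt}(u\cdot X_t)$ along \eqref{eq:ode_multivariate}, use $X_t\in\Delta^{n-1}$ to rewrite it as $\overline u(E_t)-u\cdot X_t$, and conclude by uniqueness that $u\cdot X_t=Y_t$, whence the Poisson representation transfers from \eqref{eq:rep_pdmp}. The paper's proof is terser in that it simply invokes \eqref{eq:rep_pdmp} directly rather than re-deriving the conditional generating function via \cref{prop:generating}, but your more explicit handling of the conditioning on $X_0$ is a harmless (and arguably cleaner) elaboration of the same argument.
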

\begin{proof}
Following~\eqref{eq:rep_pdmp}, we just need to show that $Y_t = u \cdot X_t$
whenever $Y_0 = u \cdot X_0$.
When $E_t=i$ and since $u$ is constant, the time derivative of $u \cdot X_t$ is
\[u_i \dot{X}_{i,t} + \sum_{j\neq i} u_j \dot{X}_{j,t} = u_i \sum_{j\neq i} X_{j,t} -\sum_{j\neq i} u_j X_{j,t} = u_i - u\cdot X_t\]
where we used~\eqref{eq:ode_multivariate} and the fact that $X_{1,t}+\cdots+X_{n,t} = 1$. The result immediately follows as $Y_t$ satisfies the same differential equation, that is, $\dot{Y}_t = u_i - Y_t$.
\end{proof}

\begin{figure}[tbp]
\vspace{-4mm}
\begin{center}
\resizebox{\textwidth}{!}{\begin{tikzpicture}
\node at (0,0) {\includegraphics[width=\textwidth]{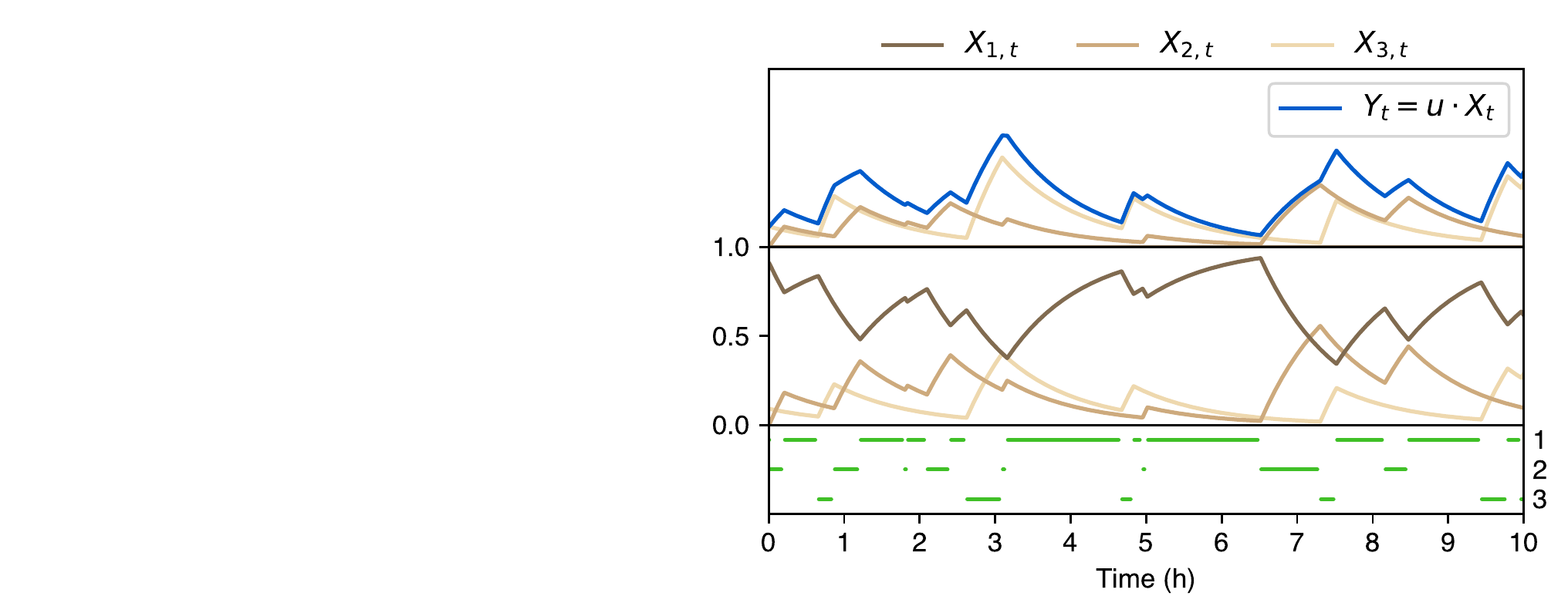}};
\node at (-3.75,0) {\resizebox{!}{4cm}{%
\begin{tikzpicture}[scale=0.9, line width = 1.2]
\tikzstyle{active}=[draw, circle, minimum width=9mm, color=gris, fill=gray!5]
\tikzstyle{inactive}=[draw, circle, minimum width=9mm, color=rouge]
\node[active] (S1) at (-9,1.8) {$\chem{S}_1$};
\node[active] (S2) at (-9,0) {$\chem{S}_2$};
\node[active] (S3) at (-9,-1.8) {$\chem{S}_3$};
\node[right] (e1) at (-8.2,1.8) {\large $\begin{cases}
\chem{X}_2 \to \chem{X}_1 \\
\chem{X}_3 \to \chem{X}_1
\end{cases}$};
\node[right] (e1) at (-8.2,0) {\large $\begin{cases}
\chem{X}_1 \to \chem{X}_2 \\
\chem{X}_3 \to \chem{X}_2
\end{cases}$};
\node[right] (e1) at (-8.2,-1.8) {\large $\begin{cases}
\chem{X}_1 \to \chem{X}_3 \\
\chem{X}_2 \to \chem{X}_3
\end{cases}$};
\end{tikzpicture}
}};
\end{tikzpicture}}\vspace{-3mm}
\caption{Multivariate PDMP in the three-state case. Left: chemical reactions associated with each promoter state. Right: sample path corresponding to the promoter example of~\cref{fig:sample_traj}, based on the same path of $E_t$. Using the same $u=(0,50,100)$ leads to the previous path of $Y_t = u \cdot X_t$.}
\label{fig:multivariate}
\end{center}
\end{figure}

Interestingly, the representation~\eqref{eq:rep_proj} can be interpreted as $Y_t$ being a “projection” of $X_t$ on $\R_+$ using $u$.
The initial condition in~\cref{prop_multivariate} turns out to be equivalent to $Y_0\in [\min(u),\max(u)]$ in~\eqref{eq:rep_pdmp}, which in fact is the physically relevant state space regarding the dynamics of $Y_t$ (i.e., values taken by $[\chem{M}]$ when treated as a concentration) so it is not too restrictive.
By~\cref{corol:stable_space}, the stationary distribution of $(M_t)_{t\geqslant 0}$ can always be represented as in~\eqref{eq:rep_proj} using that of $(X_t)_{t\geqslant 0}$.

Motivated by~\cref{prop_multivariate}, we shall focus on $(E_t,X_t)_{t\geqslant 0}$ which will be referred to as the “multivariate PDMP” as it is also an ergodic piecewise-deterministic Markov process~\cite{Benaim2012}.
Its generator is given from~\eqref{eq:ode_multivariate} by
\begin{equation}\label{eq:gen_multivariate_pdmp}
\widetilde{\mathcal{L}}_n f(x) = Qf(x) + \sum_{i=1}^n F_i(x) \dr_{x_i}f(x)
\end{equation}
for $x\in \R_+^n$, with $F_i(x) = (x_1+\dots+x_n)\diag(e_i) - x_iI$ where $\diag(e_i)\in \Mat{n}$ is the matrix whose only nonzero entry is $[\diag(e_i)]_{i,i}=1$.

\subsection{Multivariate Laplace transform}

Let $\mathcal{M}(\R_+^n)$ denote the vector space of finite measures on $\R_+^n$ and let $\mathcal{M}_n(\R_+^n) = \mathcal{M}(\R_+^n)^n$ represent finite measures on $\lb 1,n \rb \times \R_+^n$.
In the remainder of this article, we focus on the stationary distribution of the multivariate PDMP, denoted by $\mu = (\mu_1,\dots,\mu_n)^\top\in \mathcal{M}_n(\R_+^n)$, and consider a random variable $(E,X) \sim \mu$.
In line with our previous notation, we define the Laplace transform $\phi = L_\mu$ by
\begin{equation}\label{eq:def_laplace_multivariate}
\phi_i(s) = \int_{\R_+^n} e^{s\cdot x} \intd{\mu_i}(x) = \Esp\left[\mathds{1}_{\{E=i\}} e^{s\cdot X}\right], \quad \forall s\in (-\infty,0]^n .
\end{equation}
From~\eqref{eq:gen_multivariate_pdmp} and the fact that $X \in\Delta^{n-1}$ (which is equivalent to $\dr_{s_1}\phi + \cdots + \dr_{s_n}\phi = \phi$), we find that $\phi$ satisfies
\begin{equation}\label{eq:laplace_multivariate}
\sum_{i=1}^n s_i\dr_{s_i}\phi(s) = (D(s) + H)\phi(s)
\end{equation}
with $D(s) = \diag(s_1,\dots,s_n)$.

Besides, an analog of~\cref{rem:laplace_c} implies that $\phi$ can be extended to an entire function on $\R^n$. More precisely, we have
\begin{equation}\label{eq:series_moments}
\phi(s) = \sum_{\alpha \in \N^n} m(\alpha) \frac{s_1^{\alpha_1} \cdots s_n^{\alpha_n}}{\alpha_1 ! \cdots \alpha_n !}
\end{equation}
where $m(\alpha)$ is given, using notation $\dr_s^\alpha = \dr_{s_1}^{\alpha_1} \cdots \dr_{s_n}^{\alpha_n}$ and $X^\alpha = {X_1}^{\alpha_1} \cdots {X_n}^{\alpha_n}$, by
\begin{equation}\label{eq:def_mellin}
m_i(\alpha) = \dr_s^\alpha \phi_i(0) = \Esp\left[\mathds{1}_{\{E=i\}} X^\alpha\right] .
\end{equation}
The convergence of the series~\eqref{eq:series_moments} for all $s\in\R^n$ is then immediate as $m_i(\alpha)\in[0,1]$ for all $\alpha \in \N^n$, by~\eqref{eq:def_mellin} and the fact that $X\in\Delta^{n-1}\subset[0,1]^n$.

\subsection{General recursion formula}

We are now interested in solving system~\eqref{eq:laplace_multivariate}. Given some fixed $s \in \R^n$, the change of unknown $\Phi_s(\omega) = \phi(\omega s_1,\dots,\omega s_n) = \phi(\omega s)$ for $\omega\in\R$ directly leads to a much simpler one-variable problem.
\begin{proposition}\label{prop:laplace_ode}
For all $s\in\R^n$, $\omega \mapsto \Phi_s(\omega)$ statisfies
\begin{equation}\label{eq:laplace_proj}
\omega\frac{d\Phi_s}{d\omega}(\omega) = (\omega D(s) + H)\Phi_s(\omega)
\end{equation}
which is an ordinary differential system.
\end{proposition}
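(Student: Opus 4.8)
The plan is to prove this by a direct application of the chain rule, reducing the claim to the already-established system~\eqref{eq:laplace_multivariate} evaluated along the ray $\omega\mapsto\omega s$. Fix $s\in\R^n$. Since $\phi$ is entire on $\R^n$ (by the analog of~\cref{rem:laplace_c}, or simply because $X\in\Delta^{n-1}\subset[0,1]^n$ is bounded, so each $\phi_i$ in~\eqref{eq:def_laplace_multivariate} is well defined and smooth for every $s\in\R^n$), the map $\omega\mapsto\Phi_s(\omega)=\phi(\omega s)$ is smooth, and componentwise differentiation through the composition gives
\[\frac{d\Phi_s}{d\omega}(\omega)=\sum_{i=1}^n s_i\,(\dr_{s_i}\phi)(\omega s),\]
hence, after multiplying by $\omega$,
\[\omega\frac{d\Phi_s}{d\omega}(\omega)=\sum_{i=1}^n (\omega s_i)\,(\dr_{s_i}\phi)(\omega s).\]

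The right-hand side is exactly the left-hand side of~\eqref{eq:laplace_multivariate} evaluated at the point $\omega s\in\R^n$. So the second step is simply to substitute: provided~\eqref{eq:laplace_multivariate} holds at $\omega s$, one obtains
\[\omega\frac{d\Phi_s}{d\omega}(\omega)=\bigl(D(\omega s)+H\bigr)\phi(\omega s)=\bigl(\omega D(s)+H\bigr)\Phi_s(\omega),\]
where I used $D(\omega s)=\diag(\omega s_1,\dots,\omega s_n)=\omega D(s)$ together with $\phi(\omega s)=\Phi_s(\omega)$. Reading $\omega$ as the single independent variable and $s$ as a frozen parameter, this is a linear first-order ordinary differential system for $\Phi_s$, which is the assertion.

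The only point that deserves a word of justification is that~\eqref{eq:laplace_multivariate} may be invoked at an arbitrary point $\omega s$, not merely for $\omega s\in(-\infty,0]^n$. This is where boundedness of $X$ (equivalently, compact support of $\mu$) enters: both sides of~\eqref{eq:laplace_multivariate} are real-analytic—indeed entire, via the moment series~\eqref{eq:series_moments}—on all of $\R^n$, and they coincide on the nonempty open octant $(-\infty,0)^n$, hence everywhere by the identity theorem; alternatively one verifies~\eqref{eq:laplace_multivariate} termwise on~\eqref{eq:series_moments}. I do not anticipate any genuine obstacle: the computation is routine, and the substantive content of the statement is really the observation that freezing the direction $s$ collapses the first-order PDE~\eqref{eq:laplace_multivariate} into an ODE in $\omega$, which is precisely the form needed for the method of characteristics and the Frobenius-type analysis carried out in the sequel.
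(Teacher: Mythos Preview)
Your proof is correct and is exactly the argument the paper intends: the text immediately preceding the proposition states that the substitution $\Phi_s(\omega)=\phi(\omega s)$ ``directly leads'' to~\eqref{eq:laplace_proj}, and your chain-rule computation reducing~\eqref{eq:laplace_multivariate} along the ray $\omega\mapsto\omega s$ is precisely that direct step spelled out. The paper gives no separate proof, so there is nothing further to compare.
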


It should be noted from~\eqref{eq:def_laplace_multivariate} that $\Phi_s$ is in fact the Laplace transform of $(E,Y)$ where $Y = s \cdot X$ (i.e., $\Phi_s = L_{\overline{\mu}}$ with $(E,Y)\sim\overline{\mu}$). Combined with~\cref{lem:feller}, this explains why~\eqref{eq:laplace_proj} happens to coincide, in the special case $s = u$, with the main equation considered in~\cite{Innocentini2013}.
An important consequence of~\eqref{eq:series_moments} is that $\Phi_s$ can be expressed as a power series in $\omega$, and some simple computation from~\eqref{eq:laplace_proj} then leads to the following recurrence relation between the coefficients.

\begin{corollary}\label{corol:rec_laplace}
Let $s\in\R^n$. Then $\Phi_s(\omega) = \sum_{k=0}^\infty c_k(s) \omega^k$ where
\begin{equation}\label{eq:rec_laplace}
Hc_0(s) = 0 \quad \text{and} \quad \forall k\geqslant 1, \quad c_k(s) = (kI-H)^{-1}D(s) c_{k-1}(s) .
\end{equation}
\end{corollary}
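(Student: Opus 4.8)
The plan is to feed the power-series form of $\Phi_s$ into the linear ODE \eqref{eq:laplace_proj} and identify coefficients of $\omega^k$; the only point requiring a genuine argument is that $kI-H$ is invertible for every integer $k\geqslant 1$.

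First I would record that $\Phi_s$ really is an entire power series in $\omega$. Indeed \eqref{eq:series_moments} shows $\phi$ is entire on $\R^n$, hence so is $\omega\mapsto\Phi_s(\omega)=\phi(\omega s)$; grouping the terms of \eqref{eq:series_moments} by total degree $k=\alpha_1+\cdots+\alpha_n$ gives $\Phi_s(\omega)=\sum_{k\geqslant 0}c_k(s)\,\omega^k$ with $c_k(s)=\sum_{|\alpha|=k}m(\alpha)\,s_1^{\alpha_1}\cdots s_n^{\alpha_n}/(\alpha_1!\cdots\alpha_n!)\in\R^n$, the series converging for all $\omega\in\R$ because $m_i(\alpha)\in[0,1]$. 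Term-by-term differentiation is thus legitimate, so $\omega\,d\Phi_s/d\omega=\sum_{k\geqslant 0}k\,c_k(s)\,\omega^k$.

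Next I would substitute into \eqref{eq:laplace_proj}. The right-hand side expands as $H\sum_{k\geqslant 0}c_k(s)\omega^k+D(s)\sum_{k\geqslant 0}c_k(s)\omega^{k+1}=Hc_0(s)+\sum_{k\geqslant 1}\bigl(Hc_k(s)+D(s)c_{k-1}(s)\bigr)\omega^k$. Since a convergent power series vanishes identically only if all its coefficients do, matching the coefficient of $\omega^k$ yields $Hc_0(s)=0$ for $k=0$ and $(kI-H)\,c_k(s)=D(s)\,c_{k-1}(s)$ for $k\geqslant 1$.

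It remains to invert $kI-H$. Since $H=Q^\top$ with $Q$ an irreducible Markov generator, every eigenvalue $\lambda$ of $H$ satisfies $\Re(\lambda)\leqslant 0$ — for instance by Gershgorin applied to $Q$, whose $i$-th disc is centred at $Q_{i,i}\leqslant 0$ with radius $\sum_{j\neq i}r_{i,j}=-Q_{i,i}$. Hence $kI-H$ has eigenvalues $k-\lambda$ with $\Re(k-\lambda)\geqslant k>0$ for any integer $k\geqslant 1$, so it is invertible and the recursion becomes $c_k(s)=(kI-H)^{-1}D(s)\,c_{k-1}(s)$. I expect no real obstacle: the argument is power-series bookkeeping together with this elementary spectral bound. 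The only nuance worth a remark is that $Hc_0(s)=0$ alone does not determine $c_0(s)$ (the kernel of $H$ is one-dimensional); its actual value $c_0(s)=\phi(0)=(\Prob(E=i))_{1\leqslant i\leqslant n}$, i.e. the stationary law of $(E_t)_{t\geqslant 0}$, is fixed separately by $\mathds{1}\cdot c_0(s)=1$, but the statement only records the homogeneous relation.
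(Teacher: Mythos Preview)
Your proposal is correct and matches the paper's approach: expand $\Phi_s$ as a power series (justified by~\eqref{eq:series_moments}), substitute into~\eqref{eq:laplace_proj}, and identify coefficients, with invertibility of $kI-H$ for $k\geqslant 1$ following from the eigenvalues of $H$ having nonpositive real part. The paper invokes Perron--Frobenius (via \cref{sec:spectral_properties}) to obtain the sharper fact that $0$ is simple and all other eigenvalues have strictly negative real part, whereas your Gershgorin argument already suffices for the invertibility needed here; your closing remark on the normalization of $c_0(s)$ is also exactly what the paper records in the paragraph following the corollary.
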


This recursion formula corresponds to the one used in~\cite{Innocentini2013}.
The irreducibility of $H$ is crucial here: a classic application of the Perron--Frobenius theorem shows that eigenvalues of $H$ all have negative real parts except the simple eigenvalue 0 (see \cref{sec:spectral_properties}), so the recurrence~\eqref{eq:rec_laplace} is well defined and $(c_k(s))_{k\geqslant 0}$ is unique up to a multiplicative constant.
Taking $\omega = 1$, we finally obtain
\begin{equation}\label{eq:series_simple}
\phi(s) = \sum_{k=0}^\infty c_k(s)
\end{equation}
which can be seen as a particular choice of summation in~\eqref{eq:series_moments}.
Since the distribution of $E$ is by definition $c_0(s) = c_0 = m(0)$, the sequence $(c_k(s))_{k\geqslant 0}$ is unique, confirming the uniqueness of $\mu$ under the assumption that $X_0 \in \Delta^{n-1}$ almost surely.

Although useful for numerical computation, especially when $H$ is diagonalizable, the recurrence relation~\eqref{eq:rec_laplace} does not make $\phi$ really explicit, the main challenge being that matrices $D(s)$ and $H$ do not commute.
Remarkably, the case where only one $s_i$ is nonzero turns out to be explicitly solvable: it is the object of~\cref{sec:refractory}. Before, let us however present a fully solvable promoter configuration.

\subsection{A fully solvable case}
\label{sec:dirichlet}

Let $\alpha_1,\dots,\alpha_n$ be positive parameters and consider the very particular case
\begin{equation}\label{eq:dirichlet_model}
r_{i,j} = \alpha_j , \quad \forall i,j\in\lb 1,n\rb, \;i\neq j .
\end{equation}
Namely, each promoter state $\chem{S}_i$ can be reached directly from all other states $\chem{S}_j$ with the same rate $\alpha_i$.
As an example, the three-state promoter in~\cref{fig:sample_traj} and thus the multivariate sample path in~\cref{fig:multivariate} belong to this case.
Such dynamics clearly have little memory: this situation is simplistic from a biological perspective but may be viewed as a useful first step, giving some interesting insight into $\mu$ in general.

\begin{proposition}\label{prop:dirichlet_promoter}
If the promoter transition rates satisfy~\eqref{eq:dirichlet_model}, the stationary distribution of mRNA is given by the hierarchical model
\begin{equation*}
\begin{aligned}
X &\sim \Dir(\alpha_1,\dots,\alpha_n) \\
M | X &\sim \Poi\left(u_1 X_1 + \cdots + u_n X_n\right)
\end{aligned}
\end{equation*}
where $X$ corresponds to the multivariate PDMP and $M$ corresponds to mRNA.
\end{proposition}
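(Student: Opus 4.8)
The plan is to reduce, via \cref{corol:stable_space} and \cref{prop_multivariate}, to identifying the stationary law of the multivariate PDMP $(E_t,X_t)_{t\geqslant 0}$, and more precisely to showing that its $X$-marginal is $\Dir(\alpha_1,\dots,\alpha_n)$ — then \cref{prop_multivariate} yields the stated $M\mid X \sim \Poi(u\cdot X)$ automatically. To get at the stationary law I would work with the moments $m_i(\beta) = \Esp[\mathds{1}_{\{E=i\}}X^\beta]$ of \eqref{eq:def_mellin} (writing $\beta\in\N^n$ for the multi-index, as $\alpha$ now denotes the Dirichlet parameters) and exploit the recursion of \cref{corol:rec_laplace}, which is already available and requires nothing new.

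First I would specialize \cref{corol:rec_laplace} to the rates \eqref{eq:dirichlet_model}. Since $H = Q^\top$, one gets $H_{i,j} = \alpha_i$ for $i\neq j$ and $H_{i,i} = \alpha_i - \bar\alpha$, where $\bar\alpha := \alpha_1 + \cdots + \alpha_n$; equivalently $H = \alpha\mathds{1}^\top - \bar\alpha I$ with $\alpha = (\alpha_1,\dots,\alpha_n)^\top$ and $\mathds{1} = (1,\dots,1)^\top$. Because \eqref{eq:series_moments}--\eqref{eq:series_simple} force $c_k(s) = \sum_{|\beta|=k} m(\beta)\, s_1^{\beta_1}\cdots s_n^{\beta_n}/(\beta_1!\cdots\beta_n!)$ with $|\beta| := \beta_1 + \cdots + \beta_n$, rewriting \eqref{eq:rec_laplace} as $(kI-H)c_k(s) = D(s)c_{k-1}(s)$ and reading off the coefficient of $s_1^{\beta_1}\cdots s_n^{\beta_n}$ turns the matrix recursion into the scalar one
\[(\bar\alpha + |\beta|)\, m_i(\beta) = \alpha_i \sum_{j=1}^n m_j(\beta) + \beta_i\, m_i(\beta - e_i), \qquad i\in\lb 1,n\rb,\ \beta\in\N^n\]
(the last term read as $0$ when $\beta_i = 0$), together with the normalization $\sum_j m_j(0) = 1$ coming from $Hc_0(s) = 0$ and conservation of total mass. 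Summing over $i$ gives $|\beta|\sum_j m_j(\beta) = \sum_j \beta_j m_j(\beta - e_j)$, so this system determines all the $m_i(\beta)$ by induction on $|\beta|$ — consistently with the uniqueness of $\mu$ already noted after \cref{corol:rec_laplace}.

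Next I would guess and check. The candidate is $m_i(\beta) = \Esp_{\Dir(\alpha)}[X^{\beta+e_i}]$, i.e., by the classical Dirichlet moment formula,
\[m_i(\beta) = \frac{\Gamma(\bar\alpha)}{\Gamma(\bar\alpha + |\beta| + 1)}\,\frac{\Gamma(\alpha_i + \beta_i + 1)}{\Gamma(\alpha_i)}\prod_{j\neq i}\frac{\Gamma(\alpha_j + \beta_j)}{\Gamma(\alpha_j)} .\]
Using $\sum_j \Esp_{\Dir(\alpha)}[X^{\beta+e_j}] = \Esp_{\Dir(\alpha)}[X^{\beta}]$ (since $X_1 + \cdots + X_n = 1$ almost surely) and $\Gamma(z+1)=z\Gamma(z)$, a one-line computation shows that these $m_i(\beta)$ satisfy the displayed recursion and the normalization. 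By the uniqueness just noted they are the moments of $\mu$, i.e. $\Esp_\mu[\mathds{1}_{\{E=i\}}X^\beta] = \Esp_{\Dir(\alpha)}[X^{\beta+e_i}]$ for all $\beta$; as both sides are the moments of finite measures on the compact set $\Delta^{n-1}$, this forces $\mu_i(\intd{x}) = x_i\, \Dir(\alpha)(\intd{x})$. Summing over $i$ and using $x_1 + \cdots + x_n = 1$ on $\Delta^{n-1}$, the $X$-marginal of $\mu$ is $\Dir(\alpha_1,\dots,\alpha_n)$, and \cref{prop_multivariate} together with \cref{corol:stable_space} concludes.

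I do not expect a genuine obstacle: the substantive content is recognizing that the matrix $H$ of \eqref{eq:dirichlet_model} turns \cref{corol:rec_laplace} into a Pólya-type recursion whose solution is the moment sequence of a Dirichlet law with one parameter raised by one, and the rest — getting the transpose in $H = Q^\top$ right (the relevant parameter in row $i$ of $H$ is $\alpha_i$, not $\alpha_j$), the elementary $\Gamma$-function verification, and the determinate moment problem on the compact simplex — is routine. If one prefers to bypass \cref{corol:rec_laplace}, the same scalar recursion drops out of testing the stationarity identity $\int \widetilde{\mathcal{L}}_n f\,\intd{\mu} = 0$ against monomials $f(x) = x^\beta$, using $x_1 + \cdots + x_n = 1$ and Euler's relation $\sum_i x_i\dr_{x_i}(x^\beta) = |\beta|x^\beta$; this variant has the side benefit of sidestepping boundary terms in the Fokker--Planck equation on $\Delta^{n-1}$.
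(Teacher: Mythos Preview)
Your argument is correct, but it takes a different route from the paper's proof. The paper notes that ``in this easy case it is possible to use \cref{corol:rec_laplace} but not even necessary'' and instead writes down an explicit density ansatz $f_i(x) = x_i\prod_j x_j^{\alpha_j-1}$ on $\Delta^{n-1}$, then checks by hand that $\dr_{x_i}(x_jf_i) = \alpha_i f_j$ and $\dr_{x_j}(x_jf_i) = \alpha_j f_i$, which makes the stationary Fokker--Planck identity~\eqref{eq:pdmp_forward_multivariate} hold term by term. Your moment approach is exactly the road the paper alludes to and bypasses: you specialize the recursion of \cref{corol:rec_laplace} to $H = \alpha\mathds{1}^\top - \bar\alpha I$, obtain the scalar P\'olya-type recursion for $m_i(\beta)$, verify it against the Dirichlet moment formula, and conclude via determinacy of the moment problem on the compact simplex. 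Both proofs end at the same place, $\mu_i(\intd{x}) = x_i\,\Dir(\alpha)(\intd{x})$, and your verification is genuinely a one-liner once you note $m_i(\beta-e_i) = \sum_j m_j(\beta) = \Esp_{\Dir(\alpha)}[X^\beta]$. The paper's density check is shorter and more self-contained (no recourse to \cref{corol:rec_laplace} or to moment determinacy), while your route has the merit of being systematic rather than ansatz-driven and of making transparent why the conditional law $\Prob_{X|E=i}$ has the $i$-th Dirichlet parameter raised by one.
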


\begin{proof}
In this easy case it is possible to use~\cref{corol:rec_laplace} but not even necessary. Indeed, we obtain from~\eqref{eq:gen_multivariate_pdmp} that $\mu$ satisfies the stationary equation
\begin{equation}\label{eq:pdmp_forward_multivariate}
\sum_{i=1}^n \dr_{x_i}(F_i \mu) = H\mu
\end{equation}
with $F_i(x) = (x_1+\dots+x_n)\diag(e_i) - x_iI$ and $H$ derived from~\eqref{eq:dirichlet_model}, that is:
\[\sum_{j\neq i}\dr_{x_i}(x_j\mu_i) - \dr_{x_j}(x_j\mu_i) = \alpha_i\sum_{j\neq i} \mu_j - \mu_i \sum_{j\neq i} \alpha_j , \quad \forall i\in\lb 1,n \rb .\]
This system turns out to be solvable “piece by piece”.
Let $\sigma$ denote the induced Lebesgue measure on $\Delta^{n-1}$, and let $\mu\in\mathcal{M}_n(\R_+^n)$ be defined by the density $f=(f_1,\dots,f_n)^\top$ with respect to $\sigma$ (meaning $\mu$ has all its mass in $\Delta^{n-1}$) up to a normalizing constant, where
\[f_i(x) = x_i\prod_{j=1}^n x_j^{\alpha_j-1} , \quad \forall i\in\lb 1,n \rb .\]
Then we easily get $\dr_{x_i}(x_jf_i) = \alpha_i f_j$ and $\dr_{x_j}(x_jf_i) = \alpha_j f_i$ for all $i,j\in\lb 1,n\rb$, $i\neq j$, and it follows that $\mu$ is solution of~\eqref{eq:pdmp_forward_multivariate} in the weak sense.
In other words, we have
\[\Prob_{X|E=i} = \Dir(\alpha_1,\dots,\alpha_{i-1},\alpha_i+1,\alpha_{i+1},\dots,\alpha_n), \quad \forall i\in\lb 1,n \rb \]
and marginalizing over $E$ leads to $\Prob_{X} = \mu_1 + \cdots + \mu_n = \Dir(\alpha_1,\dots,\alpha_n)$.
\end{proof}

\section{Complete solution for refractory promoters}
\label{sec:refractory}

Recall that a promoter state $i\in\lb 1,n \rb$ is \emph{active} if $u_i > 0$ and \emph{inactive} if $u_i = 0$. In this section we consider the particular case of \emph{refractory promoters}, that is, for which only one state is active.
In line with the previous section, we consider a random variable $M$ generated by
\[M | X \sim \Poi(u\cdot X)\]
so $\Prob_M$ is the mRNA stationary distribution.
Without loss of generality, we assume that the active state is the first one (\cref{fig:refractory}) with $u_1 = \nu >0$ so that $u \cdot X = \nu X_1$.
We derive an explicit formula for $\Prob_M$ in this case, thereby simplifying and generalizing the results in~\cite{Zhou2012}, and extend some of the ideas in~\cite{Dattani2016,Dattani2017} concerning $X_1$.

\bigskip

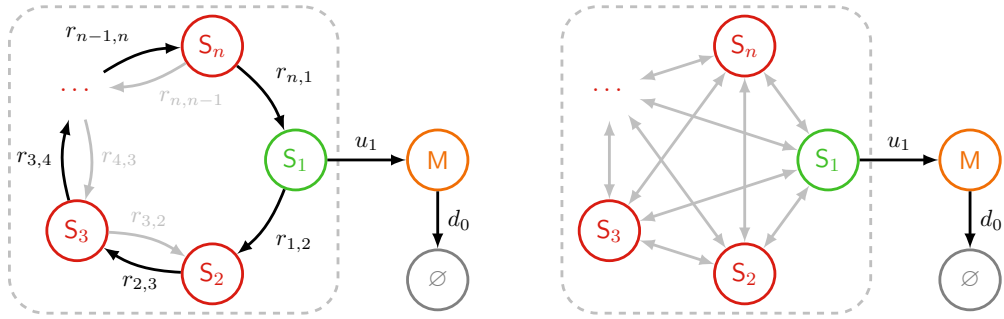
\begin{figure}[htbp]
\begin{center}
\resizebox{!}{4.1cm}{%
\begin{tikzpicture}[scale=0.9, line width = 1.2]
\tikzstyle{active}=[draw, circle, minimum width=9mm, color=vert]
\tikzstyle{inactive}=[draw, circle, minimum width=9mm, color=rouge]
\tikzstyle{RNA}=[draw, circle, minimum width=9mm, color=orange]
\tikzstyle{RNAx}=[draw, circle, minimum width=9mm, color=gris]
\tikzstyle{bleft}=[bend left=15]
\tikzstyle{arrow}=[{->},>=latex]
\tikzstyle{gray}=[color=gray!50]
\node[draw, color=gray!50, dashed, rounded corners=15pt, minimum width=48.95mm, minimum height=46mm] (rect1) at (-0.02,0) {};
\node[active]   (S1) at (2.00,-0.00) {$\chem{S}_1$};
\node[inactive] (S2) at (0.62,-1.90) {$\chem{S}_2$};
\node[inactive] (S3) at (-1.62,-1.18) {$\chem{S}_3$};
\node[color=rouge, circle, minimum width=9mm] (S4) at (-1.62,1.18) {\dots};
\node[inactive] (S5) at (0.62,1.90) {$\chem{S}_n$};
\draw[arrow] (S1) to[bleft] node[midway, color=black, below right]{\small $r_{1,2}$} (S2);
\draw[arrow] (S5) to[bleft] node[midway, color=black, above right]{\small $r_{n,1}$} (S1);
\draw[arrow] (S2) to[bleft] node[midway, color=black, below]{\small $r_{2,3}$} (S3);
\draw[arrow,gray] (S3) to[bleft] node[midway, above, gray]{\small $r_{3,2}$} (S2);
\draw[arrow] (S3) to[bleft] node[midway, color=black, left]{\small $r_{3,4}$} (S4);
\draw[arrow, gray] (S4) to[bleft] node[midway, right, gray]{\small $r_{4,3}$} (S3);
\draw[arrow] (S4) to[bleft] node[midway, color=black, above left]{\small $r_{n-1,n}$} (S5);
\draw[arrow, gray] (S5) to[bleft] node[midway, below right, gray]{\small $r_{n,n-1}$} (S4);
\node[RNA] (M) at (4.35,0) {$\chem{M}$};
\node[RNAx] (Mx) at (4.35,-2) {$\varnothing$};
\draw[arrow] (S1) to node[midway, color=black, above]{\small $u_1$} (M);
\draw[arrow] (M) to node[midway, color=black, right]{\small $d_0$} (Mx);
\end{tikzpicture}
} \hspace{7mm}
\resizebox{!}{4.1cm}{%
\begin{tikzpicture}[scale=0.9, line width = 1.2]
\tikzstyle{active}=[draw, circle, minimum width=9mm, color=vert]
\tikzstyle{inactive}=[draw, circle, minimum width=9mm, color=rouge]
\tikzstyle{RNA}=[draw, circle, minimum width=9mm, color=orange]
\tikzstyle{RNAx}=[draw, circle, minimum width=9mm, color=gris]
\tikzstyle{bleft}=[bend left=0]
\tikzstyle{gray}=[color=gray!50]
\tikzstyle{arrow}=[{->},>=latex]
\tikzstyle{link}=[{<->},>=latex, gray]
\node[draw, color=gray!50, dashed, rounded corners=15pt, minimum width=46mm, minimum height=46mm] (rect2) at (0.15,0) {};
\node[active]   (S1) at (2.00,-0.00) {$\chem{S}_1$};
\node[inactive] (S2) at (0.62,-1.90) {$\chem{S}_2$};
\node[inactive] (S3) at (-1.62,-1.18) {$\chem{S}_3$};
\node[color=rouge, circle, minimum width=9mm] (S4) at (-1.62,1.18) {\dots};
\node[inactive] (S5) at (0.62,1.90) {$\chem{S}_n$};
\draw[link] (S1) to[bleft] node[color=black]{} (S2);
\draw[link] (S1) to node[color=black]{} (S3);
\draw[link] (S1) to node[color=black]{} (S4);
\draw[link] (S5) to[bleft] node[color=black]{} (S1);
\draw[link] (S2) to[bleft] node[color=black]{} (S3);
\draw[link] (S2) to node[color=black]{} (S4);
\draw[link] (S2) to node[color=black]{} (S5);
\draw[link] (S3) to[bleft] node[color=black]{} (S4);
\draw[link] (S3) to node[color=black]{} (S5);
\draw[link] (S4) to[bleft] node[color=black]{} (S5);
\node[RNA] (M) at (4.35,0) {$\chem{M}$};
\node[RNAx] (Mx) at (4.35,-2) {$\varnothing$};
\draw[arrow] (S1) to node[midway, color=black, above]{\small $u_1$} (M);
\draw[arrow] (M) to node[midway, color=black, right]{\small $d_0$} (Mx);
\end{tikzpicture}
}
\caption{Dynamics of the system in the case of refractory promoters. Black arrows correspond to reactions with positive rates while grey ones indicate reactions that can have rate $0$.
Left: cyclic refractory promoters as considered in~\cite{Zhou2012}. Right: general refractory promoters as considered here.}
\label{fig:refractory}
\end{center}
\end{figure}

\begin{remark}
It should now be clear that refractory promoters are associated with marginal distributions of $X$. For instance, one easily recovers the fact that $\Prob_M$ is a scaled Beta-Poisson mixture in the case of the two-state model~\cite{Kim2013}: when $n=2$, $r_{2,1} = \alpha_1$ and $r_{1,2} = \alpha_2$, we get $X \sim \Dir(\alpha_1,\alpha_2)$ by \cref{prop:dirichlet_promoter} and then it is well known that $X_1 \sim \Bet(\alpha_1,\alpha_2)$ and $X_2 \sim \Bet(\alpha_2,\alpha_1)$.
\end{remark}

\subsection{Notation and definitions}

The refractory case also provides notions of active and inactive \emph{periods}, which are respectively the time $T_1$ spent in the active state before becoming inactive and the time $T_0$ spent in other states before becoming active.
By definition of the process, $T_1$ has density $f_{T_1}$ on $[0,+\infty)$ given by $f_{T_1}(t) = \lambda \exp(-\lambda t)$ where $\lambda=\sum_{j\neq 1} r_{1,j}$, and the density of $T_0$ is also available explicitly.

\begin{lemma}\label{lem:inactive_period}
The inactive period $T_0$ has density $f_{T_0}$ defined on $[0,+\infty)$ by
\[f_{T_0}(t) = \left[\tilde{H}\exp(t\tilde{H})\pi\right]_1\]
where $\pi = (\pi_1,\dots,\pi_n)^\top$ is defined by $\pi_{1} = 0$ and $\pi_i = r_{1,i}/{\sum_{j\neq 1} r_{1,j}}$ for $i\neq 1$, and $\tilde{H}\in\Mat{n}$ is obtained from $H$ by replacing the first column with zeros.
\end{lemma}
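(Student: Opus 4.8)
The plan is to identify $T_0$ with the absorption time of a finite continuous-time Markov chain and then read off its law from a matrix exponential. First I would pin down the distribution of the promoter state at the instant an inactive period begins. By the strong Markov property and the elementary structure of a CTMC jump chain (competing exponentials), conditionally on leaving the active state $1$ the promoter jumps to state $i\neq 1$ with probability $r_{1,i}/\sum_{j\neq 1}r_{1,j}$, independently of the sojourn time in state $1$ and of the past. This is exactly the distribution $\pi$, which puts no mass on state $1$ since $\pi_1 = 0$.

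Next I would observe that during an inactive period the promoter moves among the states with the original rates until the first time it reaches state $1$, at which moment the period ends. Hence, if we modify the chain by making state $1$ absorbing — that is, replace the backward generator $Q$ by $\tilde Q$, obtained from $Q$ by zeroing its first row — the trajectory up to the first hitting time of state $1$ is unchanged in law, so $T_0$ equals the absorption time at state $1$ of the modified chain started from $\pi$. The key bookkeeping point is that transposing this operation gives $\tilde Q^\top = \tilde H$: zeroing the first row of $Q$ corresponds to zeroing the first column of $H = Q^\top$. Since distributions of the promoter chain evolve under the forward generator $Q^\top = H$, distributions of the absorbed chain evolve under $\tilde H$.

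The computation is then short. Let $q(t) = e^{t\tilde H}\pi$ be the (column) vector of state probabilities of the absorbed chain at time $t$: it is the solution of the Kolmogorov forward equation $\dot q = \tilde H q$ with $q(0) = \pi$. Because state $1$ is absorbing and $\pi_1 = 0$, the events $\{E_t = 1\}$ and $\{T_0 \leq t\}$ coincide, so the cumulative distribution function of $T_0$ is $F_{T_0}(t) = q_1(t) = [e^{t\tilde H}\pi]_1$. This is real-analytic in $t$ with $F_{T_0}(0) = \pi_1 = 0$, hence $T_0$ has no atom at $0$ and admits the density $f_{T_0}(t) = F_{T_0}'(t) = [\tilde H e^{t\tilde H}\pi]_1$, which is the claimed formula. (That $F_{T_0}(t)\to 1$ as $t\to\infty$ follows from irreducibility of $H$, but is not needed for the density.)

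The main obstacle — really the only delicate point — is this transpose/convention bookkeeping: one must verify that "make state $1$ absorbing" on the backward side ($Q \mapsto \tilde Q$, first row zeroed) is dual to "zero the first column of $H$" on the forward side, and that it is $\tilde H$, not $\tilde H^\top$, that propagates the state distribution. Once that is settled, what remains is the standard absorption-time argument for finite Markov chains together with a one-line differentiation.
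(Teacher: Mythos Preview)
Your proposal is correct and follows essentially the same argument as the paper: identify $T_0$ with the absorption time of the chain made absorbing at state $1$, write the cumulative distribution function as $F_{T_0}(t)=[\exp(t\tilde{H})\pi]_1$, and differentiate. Your treatment is in fact slightly more explicit on the one point the paper leaves terse, namely the transpose bookkeeping showing that zeroing the first row of $Q$ corresponds to zeroing the first column of $H$.
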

\begin{proof}
Consider the Markov process $(\tilde{E}_t)_{t\geqslant 0}$ with generator $\tilde{H}$ (it becomes “stuck” in state $i=1$ as soon as it is reached) and such that $\Prob_{\tilde{E}_0} = \pi$. The inactive period can then be defined as $T_0 = \inf\{t>0 \,|\, \tilde{E}_t = 1 \}$. Its cumulative distribution function is
\[F_{T_0}(t) = \Prob(T_0 \leqslant t) = \Prob(\tilde{E}_t = 1) = \left[\exp(t\tilde{H})\pi\right]_1 \]
and the result follows by taking the derivative of $F_{T_0}$.
\end{proof}
The distribution of $T_0$ is not the main point here but it enlightens the underlying linear algebra that also appears in the next results. In addition, we shall use~\cref{lem:inactive_period} in~\cref{sec:applications} to gain insight into the particular dynamics of some promoters.

As found in~\cite{Zhou2012,Dattani2017}, distributions of $M$ and $X_1$ can be expressed in a compact way using generalized hypergeometric functions. Let us introduce some related notation, borrowed from~\cite{Slater1966}. Given $A\in\N$ and $a = (a_1,\dots,a_{A}) \in \C^{A}$, we define
\[(a)_k = \prod_{i=1}^A \frac{\Gamma(a_i+k)}{\Gamma(a_i)} = \prod_{i=1}^A a_i (a_i+1) \cdots (a_i+k-1)\]
for $k\in\N$, adopting the convention $(a)_k = 1$ if $A=0$. Then, given also $B\in\N$ and $b = (b_1,\dots,b_B) \in \C^B$, we define the hypergeometric function $\hyper{A}{B}$ as
\[\hyper{A}{B}[(a);(b);x] = \sum_{k=0}^\infty \frac{(a)_k}{(b)_k} \frac{x^k}{k!} \]
for $x\in\R$ such that the series is well defined and converges.
For $z\in\C$, we shall write $a+z = (a_1+z,\dots,a_A+z)\in\C^A$ and $b+z = (b_1+z,\dots,b_B+z)\in\C^B$.

We finally set $n = N+1$ to simplify the notation in this section, so that $N\geqslant 1$ is the number of inactive states.
Then, combining the Perron--Frobenius theorem as mentioned beside \cref{corol:rec_laplace} with some other linear algebra results (see~\cref{sec:spectral_properties}), we can define two fundamental families of eigenvalues.

\begin{lemma}\label{lem:eigenvalues}
For $i\in\lb 1,n \rb$, let $H_{\{i\}}\in\Mat{N}$ be the matrix obtained from $H$ by removing the $i$-th row and the $i$-th column. Furthermore, let
\vspace{-1mm}
\[a^{(i)} = (a^i_1,\dots,a^i_N)\in\C^N \quad \text{and} \quad b = (b_1,\dots,b_N)\in\C^N\vspace{-1mm}\]
respectively denote the eigenvalues of $-H_{\{i\}}$ and the nonzero eigenvalues of $-H$, counted with multiplicity. Then $a^i_1,\dots,a^i_N$, $b_1,\dots,b_N$ all have positive real parts and the promoter stationary distribution is given by
\vspace{-2mm}
\[\Prob_E(i) = \prod_{k=1}^N \frac{a^i_k}{b_k}, \quad \forall i\in\lb 1,n \rb .\]
\end{lemma}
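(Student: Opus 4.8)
The plan is to compute the stationary distribution of $E$ by extracting $c_0(s)$ from the recursion in \cref{corol:rec_laplace}, but actually a cleaner route goes through the observation, made just before \cref{lem:eigenvalues}, that $H_{\{i\}}$ governs the sub-dynamics obtained by killing state $i$. Concretely, I would first establish the positivity of the real parts: since $H$ is the transpose of an irreducible generator, the Perron--Frobenius argument cited beside \cref{corol:rec_laplace} gives that the nonzero eigenvalues $b_k$ of $-H$ have positive real part, and for $a^{(i)}$ one notes that $-H_{\{i\}}$ is a strictly sub-stochastic (sub-Markovian) generator's negative, i.e. it equals $-H$ restricted to $\lb 1,n\rb\setminus\{i\}$ with the transitions into $i$ turned into killing; such a matrix is invertible with spectrum in the open right half-plane (this is exactly the linear algebra deferred to \cref{sec:spectral_properties}, and it is the same fact that makes $\tilde H$ in \cref{lem:inactive_period} have a genuine exit time). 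So the products $\prod_k a^i_k/b_k$ are well-defined nonzero reals (complex eigenvalues come in conjugate pairs, so the product is real).

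For the formula itself, the key identity is characteristic-polynomial based. Write $p_H(\lambda)=\det(\lambda I - H)$ and $p_{H_{\{i\}}}(\lambda)=\det(\lambda I - H_{\{i\}})$. Then $\prod_{k=1}^N a^i_k = \det(-H_{\{i\}}) = p_{H_{\{i\}}}(0)\cdot(-1)^N\cdot(-1)^N = \det(-H_{\{i\}})$, which up to sign is the principal minor of $H$ obtained by deleting row and column $i$ — that is, the $(i,i)$ cofactor of $H$, i.e. $[\operatorname{adj}(H)]_{i,i}$ up to the factor $(-1)^{?}$ which I would track carefully. Likewise $\prod_{k=1}^N b_k$ equals $p_H(\lambda)/\lambda$ evaluated at $\lambda=0$, i.e. $-p_H'(0)=\tr(\operatorname{adj}(-H))=\sum_j[\operatorname{adj}(-H)]_{j,j}$. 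Hence $\prod_k a^i_k/b_k = [\operatorname{adj}(-H)]_{i,i}/\sum_j[\operatorname{adj}(-H)]_{j,j}$. Now the standard fact that the stationary distribution of an irreducible generator $Q=H^\top$ is proportional to the diagonal of $\operatorname{adj}(-Q)$ (equivalently $\Prob_E \propto$ diagonal cofactors of $-H$, the Markov chain tree theorem / matrix-tree form) finishes it: $\Prob_E(i)=[\operatorname{adj}(-H)]_{i,i}/\sum_j[\operatorname{adj}(-H)]_{j,j}=\prod_k a^i_k/b_k$.

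The main obstacle, and where I would spend the care, is matching the minors of $H$ to the stationary vector with the correct orientation: $\Omega$, $\Phi$, and $\mu$ are all phrased with $H=Q^\top$, so "the diagonal of $\operatorname{adj}(-H)$" must be cross-checked against $c_0(s)$ satisfying $Hc_0=0$ from \eqref{eq:rec_laplace} — equivalently $c_0 = m(0) = \Prob_E$ as noted right after \eqref{eq:series_simple}. Concretely I would verify $H c_0 = 0$ with $c_0(i) \propto [\operatorname{adj}(-H)]_{i,i}$ using $H\operatorname{adj}(-H) = -\det(H) I = 0$ on one hand, but that gives $\operatorname{adj}(-H)$ has columns in $\ker H$ up to scaling, so one must argue all columns are proportional (true since $\ker H$ is one-dimensional by irreducibility) and that the common direction has $i$-th entry $[\operatorname{adj}(-H)]_{i,i}$. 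A secondary, purely bookkeeping point is the sign $(-1)^N$ appearing when passing between "$N\times N$ principal submatrix of $H$" and "$(i,i)$ cofactor", which cancels between numerator and denominator but should be displayed. Everything else — convergence, the Perron--Frobenius input — is quoted from \cref{sec:spectral_properties} and \cref{corol:rec_laplace}.
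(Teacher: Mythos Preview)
Your proposal is correct and follows essentially the same route as the paper's proof in \cref{sec:spectral_properties}: Gershgorin plus Perron--Frobenius for the eigenvalue localization, and the identification of $\Prob_E$ with the (normalized) diagonal principal minors of $-H$. The paper simply cites \cite[Lemma 1]{Innocentini2013} for the fact that $v_0=(\det(H_{\{1\}}),\dots,\det(H_{\{n\}}))^\top$ is the Perron eigenvector of $H$, whereas you sketch a proof of that same fact via the rank-one structure of $\operatorname{adj}(-H)$; your ``main obstacle'' dissolves once you observe that $\ker H^\top=\ker Q=\operatorname{span}(\mathbf{1})$, which forces $\operatorname{adj}(-H)=v\,\mathbf{1}^\top$ and hence every column equals $v$ with $v_i=[\operatorname{adj}(-H)]_{i,i}$.
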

Remarkably, it turns out that $\Prob_M$ is directly parametrized by these eigenvalues.

\subsection{Exact mRNA distribution}

Let us begin by describing the continuous component $X_1$.
In all that follows, we consider $a = a^{(1)}$ and $b$ as defined in \cref{lem:eigenvalues}.
The results will be based on $X_1$ for simplicity but immediately generalize to any $X_i$ by replacing $a = a^{(1)}$ with $a^{(i)}$.

\begin{theorem}
The Laplace transform of $X_1$ is given by
\begin{equation}\label{eq:laplace_marginal}
L_{X_1}(s_1) = \Esp\left[e^{s_1 X_1}\right] = \hyper{N}{N}[(a);(b);s_1] .
\end{equation}
\end{theorem}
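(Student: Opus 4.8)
The plan is to run the recursion of \cref{corol:rec_laplace} in the special direction $s = s_1 e_1$, where $e_1 = (1,0,\dots,0)^\top$ is the first canonical basis vector, and to exploit that there $D(s) = s_1\diag(e_1)$ acts as a rank-one operator. Since $s\cdot X = s_1 X_1$ for this choice, summing the components of \eqref{eq:def_laplace_multivariate} gives $L_{X_1}(s_1) = \sum_{i=1}^n \phi_i(s_1 e_1)$, and by \eqref{eq:series_simple} this equals $\sum_{k\geq 0}\bigl(\sum_{i=1}^n [c_k(s_1 e_1)]_i\bigr)$. The structural point is that $D(s_1 e_1)\,c_{k-1} = s_1\,[c_{k-1}]_1\,e_1$, so \eqref{eq:rec_laplace} collapses for $k\geq 1$ to $c_k = s_1\,[c_{k-1}]_1\,(kI-H)^{-1}e_1$ (well defined since, as recalled beside \cref{corol:rec_laplace}, $0$ is the only eigenvalue of $H$ without negative real part, so $k$ is never an eigenvalue): the whole sequence is steered by the single scalar $\gamma_k := [c_k]_1$.

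I would then extract the two scalar ingredients. Because $H = Q^\top$ has columns summing to zero, $(1,\dots,1)(kI-H) = k(1,\dots,1)$, hence $(1,\dots,1)(kI-H)^{-1} = \tfrac1k(1,\dots,1)$; left-multiplying $c_k = (kI-H)^{-1}D(s_1 e_1)c_{k-1}$ by $(1,\dots,1)$ yields $\sum_i[c_k]_i = \tfrac{s_1}{k}\gamma_{k-1}$ for $k\geq 1$, while $\sum_i[c_0]_i = 1$ since $c_0 = m(0) = \Prob_E$. For $\gamma_k$ itself I need $[(kI-H)^{-1}]_{1,1}$, which by the cofactor formula equals $\det(kI_N-H_{\{1\}})\big/\det(kI-H)$; with the eigenvalue bookkeeping of \cref{lem:eigenvalues} (eigenvalues of $-H_{\{1\}}$ are $a_1,\dots,a_N$, nonzero eigenvalues of $-H$ are $b_1,\dots,b_N$, and $0$ is simple) this is $\prod_{j=1}^N(k+a_j)\big/\bigl(k\prod_{j=1}^N(k+b_j)\bigr)$, so $\gamma_k = s_1\,\gamma_{k-1}\,\prod_j(k+a_j)\big/\bigl(k\prod_j(k+b_j)\bigr)$.

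Telescoping this first-order recursion and using $\gamma_0 = \Prob_E(1) = \prod_{j=1}^N a_j/b_j$ (again \cref{lem:eigenvalues}) gives $\gamma_k = \gamma_0\,\dfrac{s_1^k}{k!}\,\dfrac{\prod_j (a_j+1)_k}{\prod_j (b_j+1)_k}$, whence
\[
L_{X_1}(s_1) \;=\; 1 + \sum_{k\geq 1}\frac{s_1}{k}\,\gamma_{k-1} \;=\; 1 + \sum_{k\geq 1}\gamma_0\,\frac{\prod_j(a_j+1)_{k-1}}{\prod_j(b_j+1)_{k-1}}\,\frac{s_1^k}{k!}.
\]
It then suffices to observe, from $(a_j)_k = a_j\,(a_j+1)_{k-1}$ for $k\geq 1$, that $\gamma_0\prod_j(a_j+1)_{k-1}\big/\prod_j(b_j+1)_{k-1} = (a)_k/(b)_k$, so the right-hand side is precisely $\sum_{k\geq 0}\frac{(a)_k}{(b)_k}\frac{s_1^k}{k!} = \hyper{N}{N}[(a);(b);s_1]$. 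All rearrangements are legitimate by absolute convergence: $\phi$ is entire (discussion after \eqref{eq:series_moments}) and $\hyper{N}{N}$ has infinite radius of convergence since its two parameter lists have equal length. The identical computation with $e_1$ replaced by $e_i$ gives the formula for each $X_i$ with $a = a^{(i)}$.

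I expect the only delicate step to be the cofactor computation, namely matching the minor $\det(kI_N-H_{\{1\}})$ with the characteristic polynomial of $H_{\{1\}}$ under the sign conventions of \cref{lem:eigenvalues}, together with the final Pochhammer reindexing; everything else is short and essentially forced.
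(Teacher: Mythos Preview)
Your proposal is correct and follows essentially the same approach as the paper's proof: specialize the recursion~\eqref{eq:rec_laplace} to $s=s_1e_1$, use that the columns of $H$ sum to zero to reduce $\sum_i[c_k]_i$ to $\tfrac{s_1}{k}[c_{k-1}]_1$, compute $[(kI-H)^{-1}]_{1,1}$ via the $(1,1)$-cofactor $\det(kI_N-H_{\{1\}})$, and identify the resulting series as $\hyper{N}{N}$ using the initial value from \cref{lem:eigenvalues}. Your write-up is slightly more explicit about the telescoping and the Pochhammer reindexing, but the argument is the same.
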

\begin{proof}
The idea is to solve the recurrence relation~\eqref{eq:rec_laplace} to get $c_k(s)$ for all $k\in\N$, assuming that $s = (s_1,0,\dots,0)$. Since we marginalize over the promoter state $E$, we are only interested in $\overline{c}_k = c_{k,1}+\cdots+c_{k,n}$.
First, summing vectorial components in $(kI-H) c_k(s) = D(s) c_{k-1}(s)$ yields $k\overline{c}_k(s) = s_1 c_{k-1,1}(s)$ so we just need $c_{k,1}(s)$. Second, it is straightforward to see that the $(1,1)$-cofactor of $kI-H$ is equal to $\det(kI - H_{\{1\}}) = (a_1 + k) \cdots (a_N + k)$ and we use it in~\eqref{eq:rec_laplace} through the standard inversion formula to get the scalar recurrence relation
\[c_{k,1}(s) = \frac{\det(kI-H_{\{1\}})}{\det(kI-H)} s_1 c_{k-1,1}(s) = \frac{(a_1 + k) \cdots (a_N + k)}{k (b_1 + k) \cdots (b_N + k)} s_1 c_{k-1,1}(s) , \quad \forall k\geqslant 1 .\]
The initial term is $c_{0,1} = \Prob_E(1) = (a_1 \cdots a_N)/(b_1 \cdots b_N)$ by~\cref{lem:eigenvalues}. Finally,
\[\overline{c}_k(s) = \frac{s_1}{k} c_{k-1,1}(s) = \frac{(a)_{k}}{(b)_{k}} \frac{s_1^k}{k!} , \quad \forall k \in \N\]
and the result directly follows from~\eqref{eq:series_simple}.
\end{proof}

Computing the derivatives of $s_1 \mapsto L_{X_1}(\nu s_1)$ and using~\cref{lem:feller}, we obtain the mRNA stationary distribution for general refractory promoters.

\begin{corollary}
If $u = (\nu,0,\dots,0)$, the distribution of $M$ is given by
\begin{equation}\label{eq:distrib_rna}
\Prob_M(k) = \frac{\nu^k}{k!} \frac{(a)_k}{(b)_k} \hyper{N}{N}[(a+k);(b+k);-\nu] .
\end{equation}
\end{corollary}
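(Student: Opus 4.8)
The plan is to apply the Poisson transform to the Laplace transform formula \eqref{eq:laplace_marginal} for $X_1$, combined with \cref{lem:feller} and \cref{corol:stable_space}. First I would recall that by \cref{prop_multivariate} (or \eqref{eq:rep_pdmp}), since $u = (\nu,0,\dots,0)$ we have $u\cdot X = \nu X_1$, so $M \mid X_1 \sim \Poi(\nu X_1)$, i.e., $\Prob_M = P\mu_{\nu X_1}$ where $\mu_{\nu X_1}$ is the law of $\nu X_1$. By the Poisson transform definition \eqref{eq:def_poisson}, this means
\[
\Prob_M(k) = \Esp\!\left[\frac{(\nu X_1)^k}{k!}\,e^{-\nu X_1}\right] = \frac{\nu^k}{k!}\,\Esp\!\left[X_1^k\, e^{-\nu X_1}\right].
\]

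Second, I would express the quantity $\Esp[X_1^k e^{-\nu X_1}]$ in terms of derivatives of the Laplace transform: since $L_{X_1}(s_1) = \Esp[e^{s_1 X_1}]$ is entire (by the analog of \cref{rem:laplace_c}, $X_1\in[0,1]$), differentiating $k$ times under the expectation gives $L_{X_1}^{(k)}(s_1) = \Esp[X_1^k e^{s_1 X_1}]$, so $\Esp[X_1^k e^{-\nu X_1}] = L_{X_1}^{(k)}(-\nu)$. Hence $\Prob_M(k) = \frac{\nu^k}{k!} L_{X_1}^{(k)}(-\nu)$. It remains to compute the $k$-th derivative of $\hyper{N}{N}[(a);(b);s_1]$ and evaluate at $s_1 = -\nu$. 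Term-by-term differentiation of $\sum_{m\geqslant 0} \frac{(a)_m}{(b)_m}\frac{s_1^m}{m!}$ produces, after reindexing $m = k + j$, the value
\[
L_{X_1}^{(k)}(s_1) = \sum_{j=0}^\infty \frac{(a)_{k+j}}{(b)_{k+j}}\frac{s_1^{j}}{j!} = \frac{(a)_k}{(b)_k}\sum_{j=0}^\infty \frac{(a+k)_j}{(b+k)_j}\frac{s_1^{j}}{j!} = \frac{(a)_k}{(b)_k}\,\hyper{N}{N}[(a+k);(b+k);s_1],
\]
where the middle equality uses the Pochhammer shift identity $(a)_{k+j} = (a)_k (a+k)_j$ (and likewise for $b$). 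Evaluating at $s_1 = -\nu$ and multiplying by $\nu^k/k!$ yields exactly \eqref{eq:distrib_rna}.

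The steps here are essentially all routine: the only points requiring a modicum of care are (i) justifying differentiation under the integral/expectation sign, which is immediate because $X_1$ is bounded in $[0,1]$ so all moments are finite and dominated convergence applies uniformly on compact $s_1$-sets, and (ii) justifying term-by-term differentiation of the hypergeometric series, which holds since $\hyper{N}{N}$ has infinite radius of convergence (the parameters $a^i_k, b_k$ have positive real parts, so no $(b)_k$ vanishes, and the ratio test gives entire convergence). I do not expect any genuine obstacle; the main substance of the result lies in the preceding theorem that identifies $L_{X_1}$, and this corollary is the mechanical transfer of that formula through the Poisson representation. One could also phrase it slightly more slickly by noting $G_{\Prob_M}(z) = L_{\nu X_1}(z-1) = L_{X_1}(\nu(z-1))$ via \cref{lem:feller} and then extracting Taylor coefficients in $z$ at $z = 0$, which amounts to the same computation with $s_1 = -\nu$.
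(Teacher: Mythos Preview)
Your proposal is correct and follows essentially the same route as the paper, which simply says ``computing the derivatives of $s_1 \mapsto L_{X_1}(\nu s_1)$ and using \cref{lem:feller}''. Your alternative phrasing at the end (via $G_{\Prob_M}(z) = L_{X_1}(\nu(z-1))$ and extracting Taylor coefficients) is exactly the paper's intended argument; your main write-up just unpacks the same computation from the Poisson-mixture side rather than the generating-function side, which is equivalent.
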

Note that since matrices $-H$ and $-H_{\{1\}}$ are real, their complex eigenvalues come by conjugate pairs so $(a+k_1)_{k_2}$ and $(b+k_1)_{k_2}$ for $k_1,k_2\in\N$ are always real numbers.

\begin{remark}
When $\nu$ is large, the Poisson layer becomes negligible, meaning $M \approx \nu X_1$, so we can alternatively use~\eqref{eq:distrib_rna} to approximate the distribution of $X_1$. More precisely, if $f_{X_1}$ denotes the density of $X_1$, we have
\[\nu\gg 1 \quad\Rightarrow\quad f_{X_1}\left(k/\nu\right) \approx \Prob_M(k) \]
which in fact corresponds to the Post--Widder inversion formula applied to $L_{X_1}$.
\end{remark}

\subsection{Mixing distribution}

When computing $\Prob_M$, it is common for tractability reasons to take a rather small value for the scale parameter $\nu$, which is coherent since $\Prob_M(k)$ vanishes quickly for $k>\nu$ by definition of the Poisson mixture. However, quantitative biological experiments often suggest $\nu=10^3$ or more~\cite{Albayrak2016,Richard2016}.
Equation~\eqref{eq:distrib_rna} then corresponds as noted above to the Post--Widder inversion formula for the distribution of $X_1$, which emerges even more as the core part of $\Prob_M$.

It is therefore interesting to consider deriving the density $f_{X_1}$ in exact form, that is, directly inverting the Laplace transform~\eqref{eq:laplace_marginal}. Fortunately, one does not have to do this from scratch as $L_{X_1}$ belongs to a well-known class~\cite{Slater1966}.
More precisely, the idea is to invert the \emph{Mellin transform} of $X_1$, defined as the meromorphic function $M_{X_1}(z) = \Esp\left[X_1^{z-1}\right]$, which coincides with the moments of $X_1$ given from~\eqref{eq:laplace_marginal} by
\begin{equation}\label{eq:moments}
\Esp\left[X_1^k\right] = \frac{(a)_k}{(b)_k} = \prod_{i=1}^N \frac{\Gamma(b_i)\Gamma(a_i+k)}{\Gamma(a_i)\Gamma(b_i+k)} , \quad \forall k\in\N .
\end{equation}
It is possible to show using an extension of Carlson's theorem that the right-hand side of~\eqref{eq:moments} actually defines $M_{X_1}$ (replacing $k$ with $z-1$), but here such a technical result is not needed since we know by~\eqref{eq:series_moments} that the distribution of $X_1$ is characterized by its moments. Namely, we need only find the unique distribution on $[0,1]$ with moments~\eqref{eq:moments}, and by Mellin inversion this one is defined by the density
\begin{equation}\label{eq:density_mellin}
f_{X_1}(x) = \frac{1}{2\pi \mathrm{i}} \prod_{i=1}^N \frac{\Gamma(b_i)}{\Gamma(a_i)} \int_{0-\mathrm{i}\infty}^{0+\mathrm{i}\infty} \prod_{i=1}^N \frac{\Gamma(a_i+z)}{\Gamma(b_i+z)} x^{-z-1} \intd{z}
\end{equation}
for $x\in(0,1)$. Up to a multiplicative constant, this is a standard Meijer G-function~\cite{Springer1970} and thus one can efficiently compute $f_{X_1}$ using numerical packages such as \texttt{mpmath}~\cite{mpmath}. Furthermore, the following result provides an actual explicit form in most cases.

\begin{theorem}\label{theo:beta_product}
Assume that $a_i - a_j \notin \Z$ for all $i,j\in\lb 1,N \rb$, $i\neq j$. Then we have
\begin{equation}\label{eq:density_hyper}
f_{X_1}(x) = \frac{1}{A} \sum_{i=1}^{N} B_i \, x^{a_i - 1} \hyper{N}{N-1}[(1 + a_i - b);(1 - a_i');x]
\end{equation}
where $a_i' = (a_1,\dots,a_{i-1},a_{i+1},\dots,a_N) \in \C^{N-1}$,
\[A = \prod_{i=1}^{N}\frac{\Gamma(a_i)}{\Gamma(b_i)} \quad \text{and} \quad B_i = \frac{\prod_{j\neq i} \Gamma(a_j - a_i)}{\prod_{j=1}^N \Gamma(b_j - a_i)} .\]
\end{theorem}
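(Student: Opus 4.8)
The plan is to invert the Mellin–Barnes integral \eqref{eq:density_mellin} by the classical residue method. The integrand $\prod_{i=1}^N \Gamma(a_i+z)/\Gamma(b_i+z)\,x^{-z-1}$ has, thanks to the hypothesis $a_i - a_j\notin\Z$ for $i\neq j$, only simple poles: for each $i$ the factor $\Gamma(a_i+z)$ contributes poles at $z = -a_i - m$ for $m\in\N$, and distinctness mod $\Z$ guarantees these families do not overlap. For $x\in(0,1)$ we close the contour to the \emph{left} (so that $x^{-z-1}\to 0$ as $\Re(z)\to-\infty$, since $x<1$), pick up the residues, and express $f_{X_1}$ as the sum of the residue series. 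First I would verify the contour-closing is legitimate: the ratio $\prod\Gamma(a_i+z)/\Gamma(b_i+z)$ decays like $|z|^{\sum(a_i-b_i)}$ along vertical lines (a standard consequence of Stirling), and one needs $\Re(\sum a_i) < \Re(\sum b_i)$ — equivalently $\Esp[X_1^{-1}] $-type growth control — or more simply one invokes that $X_1$ is supported in $[0,1]$ so the density is a genuine $L^1$ object and the Meijer $G$-function representation is valid; the termwise residue expansion of a Meijer $G$ of this type is textbook (e.g.\ Slater \cite{Slater1966}).

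Next I would compute the residue at $z = -a_i - m$. Since the pole comes only from $\Gamma(a_i+z)$, with $\operatorname{Res}_{z=-a_i-m}\Gamma(a_i+z) = (-1)^m/m!$, the residue equals
\[
\frac{(-1)^m}{m!}\,\frac{\prod_{j\neq i}\Gamma(a_j - a_i - m)}{\prod_{j=1}^N \Gamma(b_j - a_i - m)}\; x^{\,a_i - 1 + m}.
\]
Then I would reorganize the $\Gamma$'s at shifted argument into Pochhammer symbols: using $\Gamma(c - m) = \Gamma(c)\,(-1)^m/(1-c)_m$ repeatedly turns $\prod_{j\neq i}\Gamma(a_j-a_i-m)/\prod_j\Gamma(b_j-a_i-m)$ into $B_i$ times a ratio of Pochhammer symbols whose $m$-dependence is exactly $(1+a_i-b)_m/(1-a_i')_m$ (the extra $(-1)^m$'s cancel against the $(-1)^m$'s from the $\Gamma$ shifts and from the residue). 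Summing over $m\ge 0$, the $x^m/m!$ assembles into $\hyper{N}{N-1}[(1+a_i-b);(1-a_i');x]$, giving the $i$-th term $B_i\,x^{a_i-1}\,\hyper{N}{N-1}[\cdots]$. Finally, summing over $i$ and dividing by the prefactor $A = \prod_i\Gamma(a_i)/\Gamma(b_i)$ carried over from \eqref{eq:density_mellin} yields \eqref{eq:density_hyper}.

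The main obstacle is the bookkeeping in the Pochhammer rearrangement — keeping track of which $(-1)^m$ factors appear (there are three sources: the residue of $\Gamma$, the $N-1$ shifts in the numerator, the $N$ shifts in the denominator, plus the $(-1)^m$ already in $\operatorname{Res}\Gamma(a_i+z)$) and checking they cancel to leave a manifestly real, sign-definite series with the stated parameters. A secondary technical point is justifying that the residue series converges for $x\in(0,1)$ and genuinely represents the inverse Mellin transform; this follows because each $\hyper{N}{N-1}$ is entire in $x$ of at most exponential-of-log type and the leading behavior $x^{a_i-1}$ with $\Re(a_i)>0$ (by \cref{lem:eigenvalues}) is integrable at $0$, but I would state it as an appeal to the standard theory of Meijer $G$-functions \cite{Springer1970,Slater1966} rather than re-derive convergence from scratch. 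One should also note in passing that real-ness of $f_{X_1}$ is automatic even though individual $a_i,b_i$ may be complex, since complex eigenvalues occur in conjugate pairs and the sum over $i$ pairs conjugate terms.
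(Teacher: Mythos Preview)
Your proposal is correct and follows exactly the approach the paper takes: the paper's proof consists of one sentence stating that it ``simply consists in evaluating~\eqref{eq:density_mellin} by the method of residues'' and points to~\cite[p.~152, equation~(4.8.1.16)]{Slater1966} for the computation. Your write-up fills in precisely those details (simple poles at $z=-a_i-m$, closing the contour to the left for $x\in(0,1)$, the Pochhammer bookkeeping), so there is nothing to add.
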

The proof simply consists in evaluating~\eqref{eq:density_mellin} by the method of residues; it appears in~\cite[p. 152, equation (4.8.1.16)]{Slater1966} as a particular case. Note that the poles of the integrand (located at $z = -a_i-k$ for $k\in\N$) are simple by hypothesis. The case with multiple poles is treated extensively in~\cite{Springer1970} but is more involved.
When $a_i-b_j\in\N$, one simply has $B_i = 0$ so there is no restriction on $b$. 
Note also that \texttt{mpmath} appears to use~\eqref{eq:density_hyper} with a perturbation technique in the general case rather than performing complex integration in~\eqref{eq:density_mellin}.

\subsection{Beta-product distributions}

Consider the case $n=3$ with rates $r_{1,2}=10$, $r_{2,3}=r_{3,1}=2$, $r_{3,2}=1$ and $r_{2,1}=r_{1,3}=0$. This gives $a = (1,4)$ and $b = (6,9)$.
Then it is easy to show using moments~\eqref{eq:moments} that $X_1$ has the same distribution as $Z_1Z_2$ where $(Z_1,Z_2) \sim \Bet(1,5)\otimes\Bet(4,5)$ or equivalently $(Z_1,Z_2) \sim \Bet(1,8)\otimes\Bet(4,2)$.
More generally, if $a$ and $b$ are real with $a_i < b_i$ for all $i$, then $X_1$ can be interpreted as a product of independent Beta-distributed random variables~\cite{Springer1970}, namely, $X_1 = Z_1 \cdots Z_N$ with $Z_i \sim \Bet(a_i,b_i-a_i)$.
These Beta-product distributions notably arise in different contexts such as statistics~\cite{Dufresne2010} and mathematical physics~\cite{Dunkl2013} and it is fruitful to extend them to complex $a$ and $b$ such that the moment sequence~\eqref{eq:moments} stays real: this indeed generates “new” distributions that cannot be realized as Beta products with real parameters~\cite{Dufresne2010}.
From this viewpoint, the multivariate PDMP turns out to be a very natural way to generate, using real transition matrices, Beta-product distributions with complex parameters.

In the case $n=3$, we mention an alternative to~\cref{theo:beta_product} that is always valid (i.e., also when $a_1 - a_2 \in \Z$). Indeed, similarly to the real case~\cite{Dufresne2010,Dunkl2013} we get
\[f_{X_1}(x) = \frac{\Gamma(b_1)\Gamma(b_2)}{\Gamma(a_1)\Gamma(a_2)\Gamma(\delta)} x^{a_1-1}(1-x)^{\delta-1} {}_2F_1[(b_1-a_2,b_2-a_2);(\delta);1-x] \]
where $\delta = b_1+b_2-a_1-a_2 = r_{1,2}+r_{1,3} >0$.
Note that $a_1$ and $a_2$ are always real in this case so the hypergeometric series has nonnegative coefficients, and thus $f_{X_1}$ can be interpreted as a mixture of Beta distributions.

\section{Applications}
\label{sec:applications}

In this section we show some interesting examples of refractory promoters, still assuming that the active state is $i=1$ and taking $u_1=\nu=10^3$. The general formulas for distributions of $T_0$, $M$ and $X_1$ were implemented in Python, making use of the \texttt{mpmath} package~\cite{mpmath} to compute~\eqref{eq:distrib_rna} and~\eqref{eq:density_mellin}. The code thereby covers all refractory promoters (i.e., with any number of states and any transition graph) and is available at \url{https://github.com/ulysseherbach/multistate}.

\subsection{Irreversible cyclic promoters}

In this case, the promoter is progressing irreversibly through $N$ inactive states (from $2$ to $n=N+1$) before reaching the active state $1$, and so on (\cref{fig:refractory}). It is a straightforward generalization~\cite{Zoller2015} of the two-state model, which corresponds to $N=1$.
As shown in~\cref{fig:irreversible_cycle}, increasing $N$ while keeping $\Esp[T_0]$ fixed tends to decrease $\Var(T_0)$, which rather intuitively decreases $\Var(M)$.

\begin{figure}[htbp]
\begin{center}
\includegraphics[width=\textwidth]{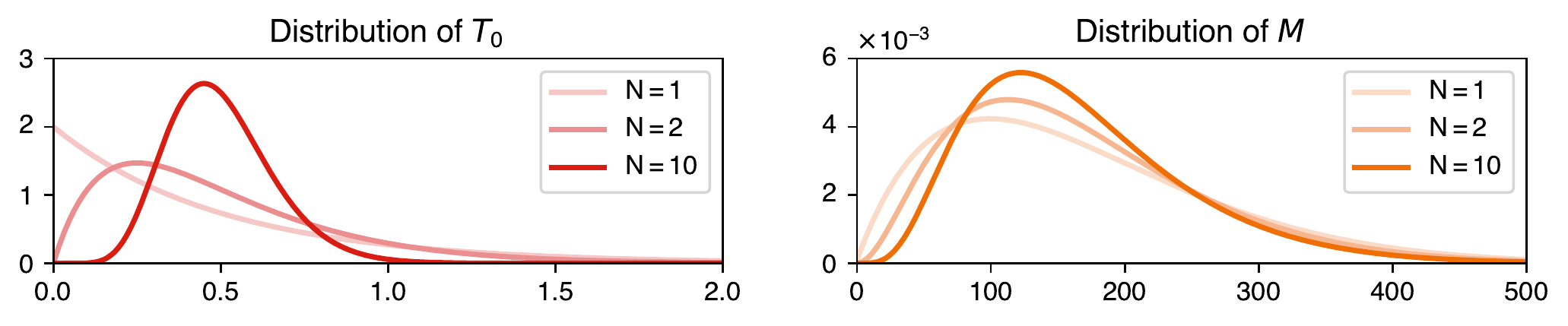} \vspace{-6mm}
\caption{Irreversible cyclic refractory promoters for $n = N+1$ with $N\geqslant 1$ inactive states. Here we set $r_{1,2}=10$ and $r_{i,i+1}=r_{n,1}=2N$ for $i\in \lb 2,N \rb$, other rates being zero, so $T_0 \sim \Gam(N,2N)$. For $N=10$, the distribution of $M$ is close to its limit when $N\to\infty$, corresponding to $T_0 = 1/2$.}
\label{fig:irreversible_cycle}
\end{center}
\end{figure}

\subsection{Irreversible cycle with a shortcut}

We now consider a more complex inactive period (\cref{fig:shortcut_promoter}), characterized by a five-state cycle with a “shortcut” from state $1$ to state $4$. The rates are chosen so that the promoter follows the long cycle most of the time, whereas it can sometimes bypass states $2$ and $3$, leading to a bimodal distribution for $T_0$ (available by \cref{lem:inactive_period}). However, the two modes are not easily detectable in sample paths and the result is indeed a unimodal distribution for $M$.

\begin{figure}[htbp]
\begin{center}
\resizebox{\textwidth}{!}{\begin{tikzpicture}
\node at (0,0) {\includegraphics[width=\textwidth]{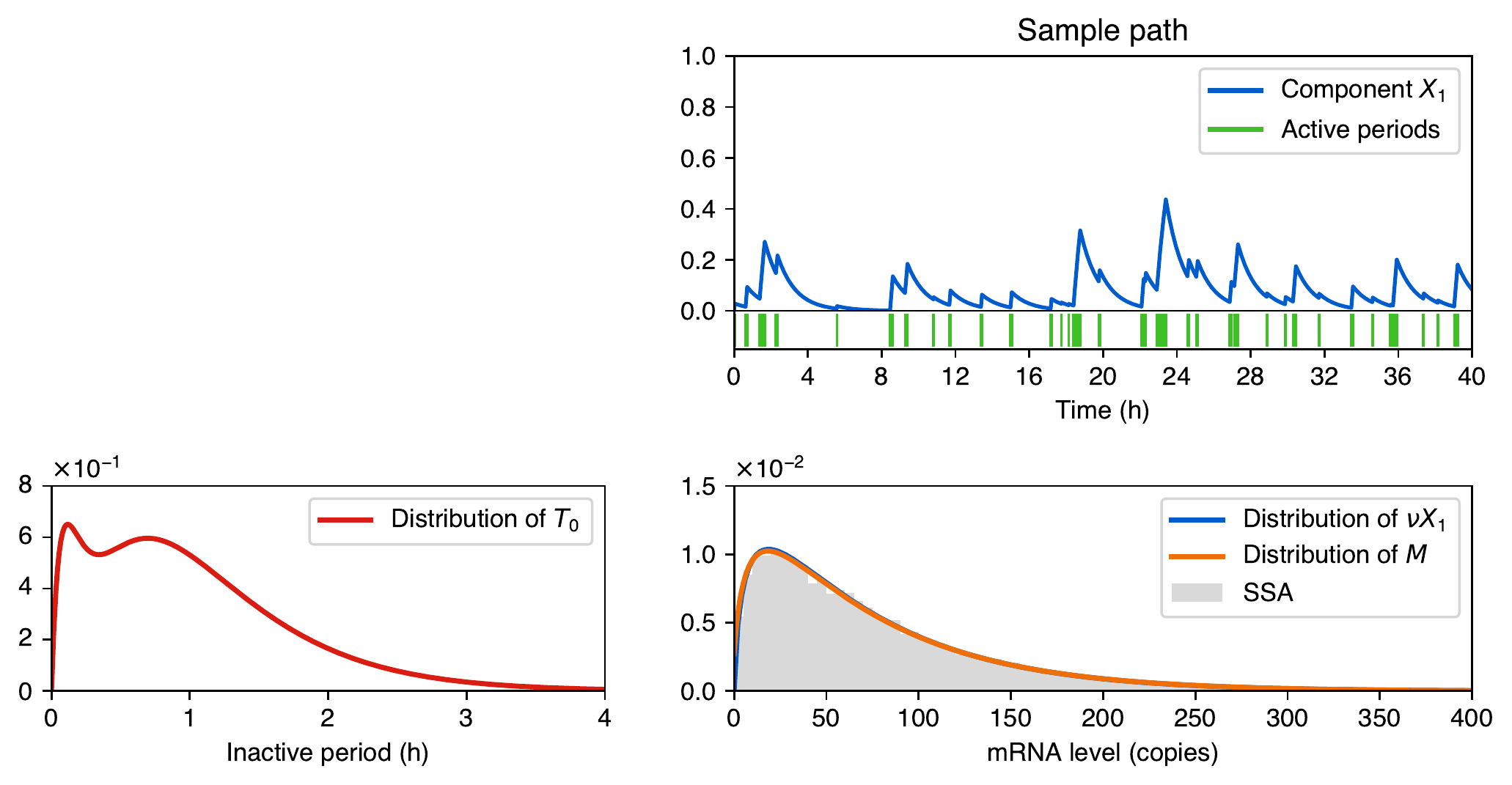}};
\node at (-3.75,1.4) {\resizebox{!}{3.5cm}{%
\begin{tikzpicture}[scale=0.9, line width = 1.2]
\tikzstyle{active}=[draw, circle, minimum width=9mm, color=vert]
\tikzstyle{inactive}=[draw, circle, minimum width=9mm, color=rouge]
\tikzstyle{bleft}=[bend left=15]
\tikzstyle{arrow}=[{->},>=latex]
\node[active]   (S1) at (2.00,-0.00) {$\chem{S}_1$};
\node[inactive] (S2) at (0.62,-1.90) {$\chem{S}_2$};
\node[inactive] (S3) at (-1.62,-1.18) {$\chem{S}_3$};
\node[inactive] (S4) at (-1.62,1.18) {$\chem{S}_4$};
\node[inactive] (S5) at (0.62,1.90) {$\chem{S}_5$};
\draw[arrow] (S1) to[bleft] node[midway, below right]{\small $10$} (S2);
\draw[arrow] (S1) to[bleft] node[midway, below]{\small $2$} (S4);
\draw[arrow] (S2) to[bleft] node[midway, color=black, below]{\small $2$} (S3);
\draw[arrow] (S3) to[bleft] node[midway, left]{\small $2$} (S4);
\draw[arrow] (S4) to[bleft] node[midway, above left]{\small $10$} (S5);
\draw[arrow] (S5) to[bleft] node[midway, above right]{\small $10$} (S1);
\end{tikzpicture}
}};
\end{tikzpicture}} \vspace{-6mm}
\caption{Example of refractory promoter with a bimodal inactive period. Left: here $n=5$ and we set $r_{1,4}=r_{2,3}=r_{3,4}=2$ and $r_{1,2}=r_{4,5}=r_{5,1}=10$, other rates being zero. Right: sample path of the component $X_1$ of the multivariate PDMP and exact stationary distributions of $M$ (discrete) and $\nu X_1$ (continuous). The two distributions turn out to be very similar since $\nu = 10^3$ and their formulas are confirmed by sampling $M$ ($10^4$ cells) using the stochastic simulation algorithm (SSA).}
\label{fig:shortcut_promoter}
\end{center}
\end{figure}

\subsection{Multiple cycles}

Finally,~\cref{fig:two_cycle_promoter} shows a promoter with two distinct cycles, which leads to a bimodal distribution for $M$. This time one can see two typical inactive periods in sample paths, but $T_0$ appears unimodal: in fact the long cycle is rare compared to the short one so the corresponding mass is flattened.

\begin{figure}[htbp]
\begin{center}
\resizebox{\textwidth}{!}{\begin{tikzpicture}
\node at (0,0) {\includegraphics[width=\textwidth]{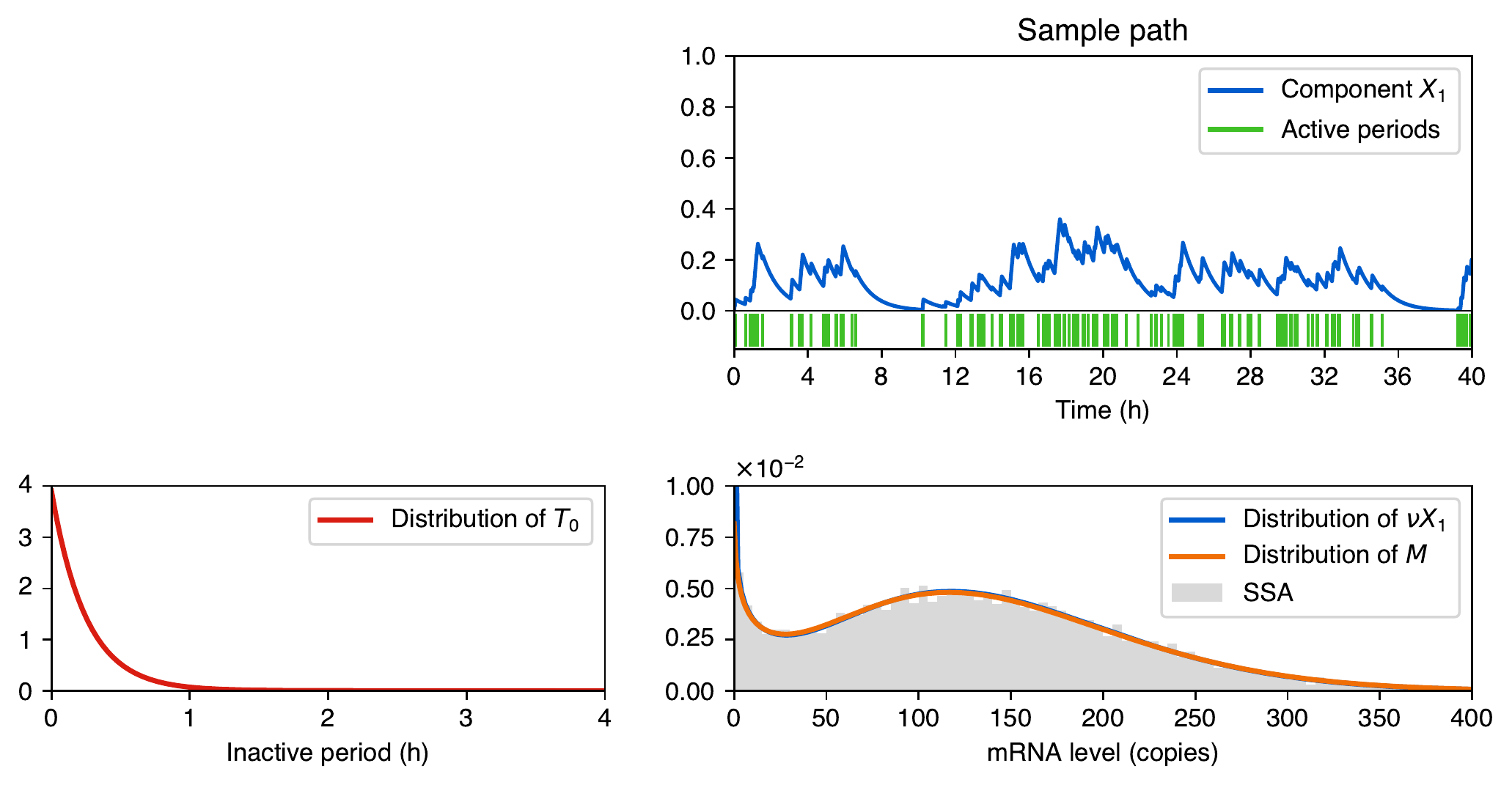}};
\node at (-3.75,1.4) {\resizebox{!}{3.5cm}{%
\begin{tikzpicture}[scale=0.9, line width = 1.2]
\tikzstyle{active}=[draw, circle, minimum width=9mm, color=vert]
\tikzstyle{inactive}=[draw, circle, minimum width=9mm, color=rouge]
\tikzstyle{bleft}=[bend left=15]
\tikzstyle{arrow}=[{->},>=latex]
\node[active]   (S1) at (2.00,-0.00) {$\chem{S}_1$};
\node[inactive] (S2) at (0.62,-1.90) {$\chem{S}_2$};
\node[inactive] (S3) at (-1.62,-1.18) {$\chem{S}_3$};
\node[inactive] (S4) at (-1.62,1.18) {$\chem{S}_4$};
\node[inactive] (S5) at (0.62,1.90) {$\chem{S}_5$};
\node[inactive] (S6) at (0,0) {$\chem{S}_6$};
\draw[arrow] (S1) to[bleft] node[midway, below right]{\small $0.5$} (S2);
\draw[arrow] (S2) to[bleft] node[midway, color=black, below]{\small $2$} (S3);
\draw[arrow] (S3) to[bleft] node[midway, left]{\small $2$} (S4);
\draw[arrow] (S4) to[bleft] node[midway, above left]{\small $1$} (S5);
\draw[arrow] (S5) to[bleft] node[midway, above right]{\small $1$} (S1);
\draw[arrow] (S1) to[bend left=20] node[midway, below]{\small $20$} (S6);
\draw[arrow] (S6) to[bend left=20] node[midway, above]{\small $4$} (S1);
\end{tikzpicture}
}};
\end{tikzpicture}} \vspace{-6mm}
\caption{Example of refractory promoter leading to a bimodal mRNA distribution. Left: here $n=6$ and we set $r_{4,5}=r_{5,1}=1$, $r_{2,3}=r_{3,4}=2$, $r_{1,2}=0.5$, $r_{1,6}=20$ and $r_{6,1}=4$, other rates being zero. Right: sample path of the component $X_1$ of the multivariate PDMP and exact stationary distributions of $M$ (discrete) and $\nu X_1$ (continuous). As in \cref{fig:shortcut_promoter}, the two distributions turn out to be very similar since $\nu = 10^3$ and their formulas are confirmed by sampling $M$ ($10^4$ cells) using the stochastic simulation algorithm (SSA).}
\label{fig:two_cycle_promoter}
\end{center}
\end{figure}

\section{Discussion and perspectives}
\label{sec:discussion}

In this paper we derived an explicit Poisson representation for a standard model of gene expression, based on a multistate promoter. The explicit form of the mRNA distribution in the refractory case is similar to that of~\cite{Dattani2016,Zhou2012}, but we used the underlying linear structure rather that exploiting particular promoter transitions. This led to more general results with simpler proofs, and also enabled us to identify marginals of the underlying multivariate PDMP as extending the class of beta-product distributions.

Compared to the original~\cite{Gardiner1977}, our approach is hybrid in that we only represent $\chem{M}$ by a Poisson mixture while keeping the common description of $\chem{S}_i$. This is crucial as the underlying dynamics then correspond to a well-defined Markov process, which is not the case in general: when some reactions involve or produce more than one molecule, the Poisson representation does not lead to a pure transport equation anymore---there can be diffusion or higher-order noise terms---and may not even exist as a real-valued Poisson mixture (see \cref{rem:complex_measure}).
One can check from \eqref{eq:chem_multistate} that the only problematic reaction here is $\chem{S}_i \xrightarrow[]{} \chem{S}_i + \chem{M}$. By conditioning on the promoter state $\chem{S}_i$, we actually get a monomolecular reaction $\varnothing \xrightarrow[]{} \chem{M}$ with rate $0$ or $u_i$ and therefore obtain a purely deterministic behavior between jumps of $[\chem{S}_i]\in\{0,1\}$.

The PDMP viewpoint is itself becoming well established in biological applications because of its great [modeling power]/[mathematical complexity] ratio~\cite{Rudnicki2017}. In fact, it is relevant and already used in various situations outside biology, for example in the so-called fluid queuing theory where the two-state model also has a meaning~\cite{Boxma2005}.
\Cref{fig:shortcut_promoter,fig:two_cycle_promoter} show that in biologically realistic conditions~\cite{Albayrak2016,Singer2014}, the distribution of $M$ efficiently approximates the one of $\nu X_1$, or in other words the Poisson layer adds a very small amount of noise to the PMDP layer.
Besides, the mRNA bimodality in~\cref{fig:two_cycle_promoter} is exactly the one observed in practice~\cite{Singer2014}, that is, with a gamma-like tail. We emphasize that such multimodality differs from the one considered in~\cite{Innocentini2013}, which comes from long stays in distinct active states (i.e., with different $u_i$ values) and has a much shorter, Poissonian tail (see~\cite{Albayrak2016} for a quantitative illustration).
In particular, contrary to a somewhat widespread belief, the two-state model is absolutely unable to reproduce gamma-tailed bimodality. The promoter of~\cref{fig:two_cycle_promoter} gets around this in a rather ad hoc way by generating two latent “bursting frequencies” corresponding to two cycles with very different mean durations. It might be better to let these frequencies emerge from actual gene networks such as the two-gene toggle switch, unless one has strong biological evidence for a particular promoter structure.

One can also obtain nontrivial multimodality by considering a single gene with feedback, i.e., with promoter transition rates now being functions of the mRNA quantity. More specifically, two-state models with linear or second-order polynomial feedback functions have been successfully investigated~\cite{Hornos2005,Iyer-Biswas2014,Dattani2016}. Interestingly, while second-order feedback is hard to tackle analytically, the case of linear feedback can be easily incorporated into the Poisson representation, for the same monomolecular aspect as above. Note that the underlying stochastic process is well defined but is actually not a PDMP because of some particular advection terms that appear~\cite{Dattani2016,Iyer-Biswas2014}.
Note also that gamma-tailed bimodality cannot be reproduced using unbounded feedback functions such as polynomials: to obtain genuine bursting frequencies, any positive feedback function should rather be bounded (e.g., of Hill type) so the activation rate stays much smaller than the inactivation rate. This makes the Poisson representation likely intractable, but the analysis can be made possible and still biologically relevant by directly starting from the PDMP formulation~\cite{Herbach2017}.

Having \cref{prop:dirichlet_promoter} in mind, it is clear that vectors $a$ and $b$ are in general not identifiable from the mRNA distribution, as Dirichlet marginals are Beta and thus indistinguishable from the two-state model.
In practice,~\cref{fig:irreversible_cycle,fig:shortcut_promoter,fig:two_cycle_promoter} suggest that distributions of $M$ or $\nu X_1$ in the bursty regime may be reasonably approximated by gamma or mixtures of gamma distributions. This favors the two-state model as a relevant approximation in many cases since the gamma distribution is nothing but the bursty limit (i.e., $r_{1,2} \gg r_{2,1}$ if the active state is $i=1$) of this model~\cite{Albayrak2016,Herbach2017}.
Furthermore, \cref{fig:shortcut_promoter,fig:two_cycle_promoter} illustrate some typical issues that would appear when trying to infer a complex promoter structure from experimental data without additional knowledge: \cref{fig:shortcut_promoter} indeed shows that mRNA data is not sufficient to reconstruct the structure, and---contrary to what could have been expected---\cref{fig:two_cycle_promoter} shows that measuring the distribution of the inactive period $T_0$ is not enough either unless measurements are extremely precise.

Finally, we were only able to get the joint distribution of the multivariate PDMP in a particular case where it turns out to be the Dirichlet distribution (see \cref{sec:dirichlet}). The joint distribution seems much more involved in general, but it may hopefully be a natural extension of the latter distribution. Intuitively, the difficulty comes from the dependence between components $X_1,\dots,X_n$ which is reduced to $X_1+\cdots+X_n=1$ in the Dirichlet case.
Knowing the joint distribution would be interesting not only mathematically, but also from a biological point of view as it would enable us to describe further complexity layers. Indeed, the translation stage is commonly modeled by
\[\chem{M} \xrightarrow[]{v} \chem{M} + \chem{P}, \quad  \chem{P} \xrightarrow[]{d_1} \varnothing\]
where $\chem{P}$ is the translated protein, and this stage can clearly be viewed for $\chem{P}$ as the transcription stage with respect to $\chem{M}$. Hence, the multivariate PDMP approach could hopefully give some useful insight for deriving the exact protein distribution, which is known to be a very difficult problem.

\appendix

\section{Spaces}
\label{sec:spaces}%
In this section we provide some details about function spaces and operators used in this paper.
First, $\mathcal{M}(\R_+)$ and $\mathcal{M}(\N)$ respectively denote measures on $(\R_+,\mathcal{B}(\R_+))$ and $(\N,\mathcal{P}(\N))$, where $\mathcal{B}(\R_+)$ is the Borel algebra over $\R_+$ and $\mathcal{P}(\N)$ is the power set of $\N$.
The first space is equipped with the total variation norm, defined by
\[\|\mu\| = \sup\{\mu(A) - \mu(\R_+\!\!\setminus\! A) \;|\; A\in \mathcal{B}(\R_+)\} = \mu^+(\R_+) + \mu^-(\R_+)\]
where $\mu = \mu^+ - \mu^-$ is the Jordan decomposition of $\mu\in\mathcal{M}(\R_+)$.
The total variation norm is defined similarly on $\mathcal{M}(\N)$, the Jordan decomposition being trivial in this case.
Furthermore, the Riesz--Markov representation theorem identifies $\mathcal{M}(\R_+)$ and $\mathcal{M}(\N)$ as the duals of Banach spaces $(\mathcal{C}_0(\R_+),\|\cdot\|_\infty)$ and $(c_0(\N),\|\cdot\|_\infty)$, respectively, where $\mathcal{C}_0(\R_+)$ is the space of continuous functions on $\R_+$ converging to zero at $+\infty$ and $c_0(\N)$ is the space of real sequences converging to zero.
Remarkably, $\|\cdot\|$ then corresponds to the dual norm so $(\mathcal{M}(\R_+),\|\cdot\|)$ and $(\mathcal{M}(\N),\|\cdot\|)$ are Banach spaces.
Finally, all of this extends to $\mathcal{M}_n(\R_+)$ and $\mathcal{M}_n(\N)$ as defined in~\cref{seq:poisson:notation} (say, with $\|\mu\| = \|\mu_1\| + \cdots + \|\mu_n\|$ for $\mu\in\mathcal{M}_n(\R_+)$ or $\mathcal{M}_n(\N)$), which are duals of
\[\mathcal{C}_0^n = \mathcal{C}_0(\lb 1,n \rb \times \R_+) = \mathcal{C}_0(\R_+)^n \quad \text{and} \quad c_0^n = c_0(\lb 1,n \rb \times \N) = c_0(\N)^n .\]

We now introduce the generator $\mathcal{L}$ of the jump Markov process $(E_t,M_t)_{t\geqslant 0}$ as the infinitesimal generator of the semigroup of bounded operators $T_t : c_0^n \to c_0^n$ defined by
\[T_t f(i,k) = \Esp[f(E_t,M_t)|E_0=i,M_0=k] , \quad \forall (i,k)\in\lb 1,n \rb \times \N\]
for $t\geqslant 0$ and $f\in c_0^n$.
Similarly, the generator $\widetilde{\mathcal{L}}$ of the piecewise-deterministic Markov process $(E_t,Y_t)_{t\geqslant 0}$ is the infinitesimal generator of $\widetilde{T}_t : \mathcal{C}_0^n \to \mathcal{C}_0^n$ defined by
\[\widetilde{T}_t f(i,x) = \Esp[f(E_t,Y_t)|E_0=i,Y_0=x] , \quad \forall (i,x)\in\lb 1,n \rb \times \R_+\]
for $t\geqslant 0$ and $f\in \mathcal{C}_0^n$.
Note that $(T_t)_{t\geqslant 0}$ and $(\widetilde{T}_t)_{t\geqslant 0}$ are indeed (strongly continuous) semigroups and that $\mathcal{L}$ and $\widetilde{\mathcal{L}}$ coincide with~\eqref{eq:full_gen_backward} and~\eqref{eq:pdmp_gen_backward}: this is standard and follows from construction of the processes. Besides, we do not need the precise domains of the generators, but only subspaces that are dense in $(c_0^n,\|\cdot\|_\infty)$ and $(\mathcal{C}_0^n,\|\cdot\|_\infty)$: one can choose sequences that have finitely many nonzero elements and restrictions to $\R_+$ of compactly supported smooth functions on $\R$, respectively.

As a result, the standard semigroup theory directly applies and the Kolmogorov backward equations of $(E_t,M_t)_{t\geqslant 0}$ and $(E_t,Y_t)_{t\geqslant 0}$ can be defined as well-posed Cauchy problems~\cite{Pazy1983}. Concerning the forward equations~\eqref{eq:full_master} and~\eqref{eq:pdmp_master}, which are the main point of~\cref{prop:stable_space}, our choice here is to directly consider the well-defined adjoint semigroups $S_t = T_t^*$ and $\widetilde{S}_t = \widetilde{T}_t^*$ rather than attempting at a precise definition of the forward Cauchy problems, since their solutions should be based on these adjoint semigroups anyway.

\begin{remark}
The semigroup $S_t : \mathcal{M}_n(\N) \to \mathcal{M}_n(\N)$ is indeed strongly continuous with generator $\Omega$ as in~\eqref{eq:full_gen_forward} but $\widetilde{S}_t : \mathcal{M}_n(\R_+) \to \mathcal{M}_n(\R_+)$ is \emph{not}: the domain of its generator $\widetilde{\Omega}$ is not dense in $(\mathcal{M}_n(\R_+),\|\cdot\|)$, so there is no hope for a dense subspace on which it could be defined “strongly”. To avoid this, a typical option is to embed $\mathcal{M}_n(\R_+)$ into a larger space of generalized functions and define $\widetilde{\Omega}$ in a weak sense, as done implicitly in~\eqref{eq:pdmp_gen_forward}. It is also possible~\cite{Rudnicki2017} to consider the subspace $L^1(\R_+)^n$, on which $\|\cdot\|$ coincides with $\|\cdot\|_1$ so that $\widetilde{\Omega}$ can be densely defined on smooth functions. Alternatively, we conjecture that one may get a strongly continuous semigroup using the Kantorovich--Rubinstein norm (see~\cite{Hille2009} and references therein). This is beyond the scope of this article but would be a very natural choice for applying the (forward) semigroup theory to PDMPs in general (see~\cite{Benaim2012}).
\end{remark}

\section{Deriving the Poisson representation}
This section provides some details on the different approaches toward the Poisson representation.

\subsection{Ansatz-based approach}
\label{sec:ansatz:approach}

This is the direct but nonrigorous way to derive the operator $\widetilde{\Omega}$.
Let $f=(f_1,\dots,f_n)^\top$ be a smooth density function on $\lb 1,n \rb \times \R_+$ and consider $p = Pf$. Let us express $\Omega p$ from definition~\eqref{eq:full_gen_forward} in terms of $f$:
\begin{enumerate}
\item \emph{Degradation}. By integration by parts, we have
\[\forall k \geqslant 1, \quad (k+1)p(k+1) - kp(k) = \int_0^{\infty} \frac{y^k}{k!}e^{-y} (\dr_y(yf)) \intd{y} = P[\dr_y(yf)](k)\]
whenever boundary terms vanish: a sufficient condition is
\[\lim_{y\to 0}(yf(y)) = \lim_{y\to \infty}(e^{-y}yf(y)) = 0 .\]
\item \emph{Creation}. Using the same boundary condition on $f$, we obtain
\[\forall k \geqslant 1, \quad C[p(k-1) - p(k)] = \int_0^{\infty} \frac{y^k}{k!}e^{-y} (-C\dr_y f) \intd{y} = P[-C\dr_y f](k)\]
\item \emph{Promoter transitions.} Since $H$ does not depend on mRNA, we directly get
\[\forall k\geqslant 0, \quad Hp(k) = H[Pf](k) = P[Hf](k) .\]
\end{enumerate}
We obtain~\eqref{eq:pdmp_master} by gathering the three pieces, forgetting the problem with $k=0$ in the first two, and recalling that $P$ is injective.

\subsection{Dual approach}
\label{sec:dual:approach}

Here we give some details on~\cref{prop:stable_space}. Recall that by~\cref{sec:spaces}, we got well-defined semigroups $S_t = T_t^*$ and $\widetilde{S_t} = \widetilde{T}_t^*$.
Thanks to the dual approach, one can derive~\eqref{eq:pde_generating} and~\eqref{eq:pde_laplace} by applying $T_t$ to functions
\[f_z : (i,k)\mapsto (\mathds{1}_{i=1} z^k, \dots, \mathds{1}_{i=n} z^k)^\top \in c_0^n\]
for $z\in(-1,1)$, and applying $\widetilde{T}_t$ to functions
\[f_s : (i,y)\mapsto (\mathds{1}_{i=1} e^{sy}, \dots, \mathds{1}_{i=n} e^{sy})^\top \in \mathcal{C}_0^n\]
for $s\in(-\infty,0)$, and using definitions of $\mathcal{L}$ and $\widetilde{\mathcal{L}}$.
Let us now derive~\cref{prop:stable_space} as a direct consequence.
Let $\mu_0\in \mathcal{M}_n(\R_+)$, $p_0 = P\mu_0$, $p(t) = S_t p_0$ and $\mu(t) = \widetilde{S}_t\mu_0$.
Then we have $G_{p(0)}(z) = L_{\mu(0)}(z-1)$, and it follows that $G_{p(t)}(z) = L_{\mu(t)}(z-1)$ for all $t\geqslant 0$ thanks to~\eqref{eq:pde_generating}-\eqref{eq:pde_laplace}.
Thus, by~\cref{lem:feller}, $p(t)=P\mu(t)$ for all $t\geqslant 0$, which can be written $S_t P\mu_0 = P\widetilde{S}_t\mu_0$ or equivalently $P^{-1}S_t P\mu_0 = \widetilde{S}_t\mu_0$.

\subsection{Path-based approach}
\label{sec:path:approach}

Given $(E_t)_{t\geqslant0}$, the conditional master equation is
\begin{equation*}
\dr_t \overline{p}(k,t) = d_0[(k+1)\overline{p}(k+1,t) - k\overline{p}(k,t)] + \overline{u}(E_t)[\overline{p}(k-1,t)\mathds{1}_{k>0} - \overline{p}(k,t)]
\end{equation*}
where $\overline{p}(k,t) = \Prob(M_t=k | (E_\tau)_{\tau\geqslant 0}) = \Prob(M_t=k | (E_\tau)_{\tau\in [0,t]})$ and with $\overline{u}$ as in~\eqref{eq:ode_univariate}.
Note that $\overline{p}$ can give back the solution $p$ of the original master equation~\eqref{eq:full_master} when integrated appropriately.

\section{Spectral properties of promoter transitions}
\label{sec:spectral_properties}%
From \cref{sec:model}, the transpose of the promoter transition matrix is defined by
\[H_{i,j} = r_{j,i} \quad \text{for} \;\; i\neq j \quad \text{and} \quad H_{i,i} = -\sum_{j\neq i} r_{i,j} .\]
In this section, we prove \cref{lem:eigenvalues} concerning the eigenvalues $a^i_1,\dots,a^i_N$ of “principal submatrices” $-H_{\{i\}}\in\Mat{N}$ for $i\in \segn$ where $n=N+1$.
Let us first recall how to derive the fact that $0$ is a simple eigenvalue of $-H$ with all its other eigenvalues $b_1,\dots,b_N$ having positive real parts. The main two ingredients are Gershgorin's circle theorem and the Perron--Frobenius theorem.

Let $\alpha = \max_i \big\{\sum_{j\neq i} r_{i,j} \big\}$. Since $H$ is irreducible, we have $\alpha > 0$ and the matrix
\[M = \frac{1}{\alpha}H + I\]
is irreducible and nonnegative.
Its spectral radius $\rho(M)$ satisfies $\rho(M)\geqslant 1$ since $1$ is clearly an eigenvalue of $M$, and it is straightforward to see by Gershgorin's circle theorem and by construction of $H$ that $\rho(M)\leqslant 1$, so $\rho(M)=1$. Hence $1$ is a simple eigenvalue of $M$ by the Perron--Frobenius theorem so $0$ is a simple eigenvalue of $H$. Finally, applying Gershgorin's circle theorem to $H$ shows that the only possible nonnegative eigenvalue of $H$ is 0, so $b_1,\dots,b_N$ all have positive real parts.

Second, consider submatrices $H_{\{i\}}\in\Mat{N}$. Again, Gershgorin's theorem shows that the only possible eigenvalue of $H_{\{i\}}$ with a nonnegative real part would be $0$. But by~\cite[Lemma 1]{Innocentini2013}, we know that the vector
\[v_0 = (\det(H_{\{1\}}), \dots, \det(H_{\{n\}}))^\top\]
is a Perron--Frobenius eigenvector of $H$ (i.e. associated with dominant eigenvalue $0$), meaning that all its components are nonzero. As a result, $0$ cannot be an eigenvalue of $H_{\{i\}}$ and $a^i_1,\dots,a^i_N$ all have positive real parts.

Finally, products $\prod_{k=1}^N b_k$ and $\prod_{k=1}^N a^i_k$ for $i\in \segn$ are real and positive since the related eigenvalues come by conjugate pairs, and standard results on the characteristic polynomial of $H$ show that
\[\prod_{k=1}^N b_k = \sum_{i=1}^n \prod_{k=1}^N a^i_k .\]
Thus, we get
\[\Prob_E(i) = \prod_{k=1}^N \frac{a^i_k}{b_k} > 0, \quad \forall i\in\lb 1,n \rb \]
using the probability condition and $\Prob_E \varpropto v_0$.

\phantomsection
\pdfbookmark[section]{Acknowledgments}{acknowledgments}
\section*{Acknowledgments}
The author is very grateful to Thibault Espinasse, Olivier Gandrillon and Anne-Laure Fougères for their constant support.
Together we thank the BioSyL Federation and the LabEx Ecofect (ANR-11-LABX-0048) of the University of Lyon for inspiring scientific events. The author also thanks the anonymous referees for their significant contribution to the improvement of the original manuscript.

\phantomsection
\pdfbookmark[section]{References}{references}
\bibliographystyle{siamplain}

\footnotesize

\end{document}